\DeclareMathAlphabet{\mathpzc}{OT1}{pzc}{m}{it}
\newtheorem{theorem}{Theorem}[section]
\newtheorem{lemma}[theorem]{Lemma}
\newtheorem{proposition}[theorem]{Proposition}
\newtheorem{remark}[theorem]{Remark}
\numberwithin{equation}{section}
\newcommand{\e}{\varepsilon}
\newcommand{\rd}{\mathrm{d}}
\begin{document}
\title{Vanishing aspect ratio limit for a fourth-order MEMS model} 
\thanks{Partially supported by the French-German PROCOPE  project 30718Z}

\author{Philippe Lauren\c{c}ot}
\address{Institut de Math\'ematiques de Toulouse, UMR~5219, Universit\'e de Toulouse, CNRS \\ F--31062 Toulouse Cedex 9, France}
\email{laurenco@math.univ-toulouse.fr}

\author{Christoph Walker}
\address{Leibniz Universit\"at Hannover, Institut f\"ur Angewandte Mathematik\\ Welfengarten 1, D -- 30167 Hannover, Germany}
\email{walker@ifam.uni-hannover.de}

\keywords{MEMS - free boundary - small aspect ratio limit}
\subjclass{35M30 - 35R35 - 35B25 - 35Q74}

\date{\today}

\begin{abstract}
So far most studies on mathematical models for microelectromechanical systems (MEMS) are focused on the so-called small aspect ratio model which is a wave or beam equation with a singular source term. It is formally derived by setting the aspect ratio equal to zero in a more complex model involving a moving boundary. A rigorous justification of this derivation is provided here when bending is taken into account.
\end{abstract}
%
\maketitle

%
%
\pagestyle{myheadings}
\markboth{\sc{Ph.~Lauren\c cot and Ch.~Walker}}{\sc{vanishing aspect ratio for a $4^\mathrm{th}$-order MEMS model}}

\section{Introduction}\label{sec1}

An idealized microelectromechanical system (MEMS) is made of a rigid  ground  plate located at height $z=-1$ and held at potential zero above which an elastic plate held at a constant potential normalized to one is suspended, the latter being at height $z=0$ at rest. Assuming that there is no variation in one of the horizontal directions, the electromechanical response of the device may be described by the elastic deformation $u=u(t,x)\ge -1$ of the elastic membrane and the electrostatic potential $\psi = \psi(t,x,z)$ between the two plates \cite[Section~7.4]{PB03}. The evolution of the electrostatic potential is given by the rescaled Poisson equation
\begin{equation}
\e^2 \partial_x^2 \psi(t,x,z) + \partial_z^2 \psi(t,x,z) = 0 \;\;\text{ in }\;\; \Omega(u(t))\ , \quad t>0\ , \label{eqpsi}
\end{equation}
in the domain 
$$
\Omega(u(t)) := \left\{ (x,z)\in I\times (-1,\infty)\ :\ -1 < z < u(t,x) \right\}\ , \qquad I:= (-1,1)\ ,
$$
between the plates, supplemented with the boundary conditions
\begin{equation}
\psi(t,x,z) = \frac{1+z}{1+u(t,x)} \;\;\text{ on }\;\; \partial\Omega(u(t))\ , \quad t>0\ , \label{bcpsi}
\end{equation}
while that of the deformation obeys a parabolic (if $\gamma=0$) or hyperbolic (if $\gamma>0$) equation
\begin{eqnarray}
& & \gamma^2 \partial_t^2 u(t,x) + \partial_t u(t,x) + \beta \partial_x^4 u(t,x) - \left( \tau + a \|\partial_x u(t)\|_2^2 \right) \partial_x^2 u(t,x) \nonumber\\
& & \qquad\qquad = - \lambda \left( \e^2 |\partial_x \psi(t,x,u(t,x))|^2 + |\partial_z \psi(t,x,u(t,x))|^2 \right)\ , \quad t>0\ , \quad x\in I\ , \label{equ}
\end{eqnarray}
supplemented with clamped boundary conditions
\begin{equation}
u(t,\pm 1 ) = \beta \partial_x u(t,\pm 1) = 0 \ , \qquad t>0\ , \label{bcu}
\end{equation}
and initial conditions
\begin{equation}
(u,\gamma \partial_t u)(0) = (u^0, \gamma u^1)\ , \qquad x\in I\ . \label{inu}
\end{equation} 
In \eqref{equ}, the terms corresponding to mechanical forces are $\beta\partial_x^4 u$, which accounts for plate bending, and $\left( \tau + a \|\partial_x u\|_2^2 \right) \partial_x^2 u$, which accounts for external stretching ($\tau>0$) and self-stretching due to moderately large oscillations ($a>0$). The right-hand side of \eqref{equ} describes the electrostatic forces exerted on the elastic plate which are proportional to the square of the trace of the (rescaled) gradient of the electrostatic potential on the elastic plate, the parameter $\lambda$ depending on the square of the applied voltage difference before scaling. Inertial forces are accounted for by $\gamma^2 \partial_t^2 u$ in \eqref{equ} with $\gamma\ge 0$ and $\partial_t u$ is a damping term which governs the dynamics in the damping dominated limit $\gamma=0$. An utmost important parameter in \eqref{eqpsi}-\eqref{inu} is the aspect ratio $\e>0$ which is proportional to the ratio height/length of the device. In applications this ratio is usually very small and therefore often taken to be zero what has far-reaching consequences. 

Indeed, setting $\e=0$ allows one to solve explicitly \eqref{eqpsi}-\eqref{bcpsi} in terms of the deformation $u_0$ as
$$
\psi_0(t,x,z) =  \frac{1+z}{1+u_0(t,x)}\,,\quad (x,z)\in \Omega(u_0 (t))\,,\quad t >0\,.
$$ 
Here, the subscript zero is used to indicate the formal limit $\e\to 0$. Furthermore, setting $\e=0$ in \eqref{equ} and using the previous formula for $\psi_0$, we are led to the so-called {\it small aspect ratio model}
\begin{equation}
\gamma^2 \partial_t^2 u_0 + \partial_t u_0 + \beta \partial_x^4 u_0 - \left( \tau + a \|\partial_x u_0\|_2^2 \right) \partial_x^2 u_0 = - \frac{\lambda}{(1+u_0)^2}\ , \quad t>0\ , \quad x\in I\ , \label{sg}
\end{equation}
supplemented with clamped boundary conditions \eqref{bcu} and initial conditions \eqref{inu}. This model is studied in several works in recent years,  among which we refer to \cite{COG09, CFT14, CEGM10, DFG10, EGG10, Flo14, Guo10, GLY14, KLNT11, KLNT15, LLZ14, LW14b, LiZ14, LiZ15, LiY07, LiL12, LLS13, LiW08,  LiW11}.

Take notice that the above computation is formal and a reliable use of the small aspect ratio model \eqref{sg} thus requires to justify it rigorously. As the computation involves a singular perturbation it is by no means obvious. The aim of this paper is to put this approximation on firm ground by providing a proof that solutions of \eqref{eqpsi}-\eqref{inu} indeed converge to that of \eqref{bcu}-\eqref{sg} as $\e\to 0$ when bending and self-stretching of the plate are taken into account, that is, when $\beta>0$ and $a\ge 0$. When these effects are not included, that is, when $\beta=a=0$, this issue  is  addressed in \cite{LW13} for the stationary problem and in \cite{ELW14} for the evolution problem in the damping dominated case $\gamma=0$. It  is  proved in these settings that solutions to \eqref{eqpsi}-\eqref{inu} converge as $\e\to 0$ to a solution to \eqref{bcu}-\eqref{sg} in suitable topologies. To achieve this result several difficulties are faced with some differences between the stationary and evolutionary settings. In fact, the main difficulty arising in the study of \eqref{eqpsi}-\eqref{inu} is the so-called \textit{pull-in instability} which manifests in the following ways. It corresponds to the non-existence of stationary solutions for values of $\lambda$ exceeding a threshold value depending on $\varepsilon$. For the evolution problem it corresponds to the occurrence of a finite time singularity  for values of $\lambda$ larger than a critical value depending not only on $\varepsilon$, but also on the initial condition $(u^0,\gamma u^1)$ in \eqref{inu}. Concerning the latter it means that the solution to \eqref{eqpsi}-\eqref{inu} only exists on a finite time interval $[0,T_{m}^\varepsilon)$ and satisfies 
$$
\lim_{t\to T_{m}^\varepsilon} \min_{x\in [-1,1]} u(t,x) = -1\ ,
$$
see \cite{LW14a}. To overcome these difficulties including the intricate dependence upon $\varepsilon$ and study the limiting behavior as $\varepsilon\to 0$, one has to find a range of values of $\lambda$ for which stationary solutions exist for all sufficiently small values of $\varepsilon$, respectively to derive a lower bound on the maximal existence time $T_m^\varepsilon$ which does not depend on $\varepsilon$. This approach  is  used in \cite{LW13, ELW14} when $\beta=\gamma=a=0$, but can only be adapted here to the evolution problem. Indeed, in contrast to \cite{LW13}, the construction of stationary solutions to \eqref{eqpsi}-\eqref{bcu} performed in \cite{LW_COCV} for $\beta>0$ and $a\ge 0$ does not provide an interval $(0,\Lambda)$ such that stationary solutions exist for all $\lambda \in (0,\Lambda)$ and $\varepsilon$ small enough. We shall thus develop an alternative argument in Section~\ref{sec3}. The second obstacle met in the analysis of the vanishing aspect ratio limit is the derivation of estimates on the solutions to \eqref{eqpsi}-\eqref{inu} which are uniform with respect to $\varepsilon$ small enough and sufficient to pass to the limit in \eqref{eqpsi}-\eqref{bcpsi} and the right-hand side of \eqref{equ}. According to the analysis performed in \cite{LW13, ELW14}, a bound on $u$ in $W_q^2(I)$ or $L_\infty(0,T,W_q^2(I))$ for some $q>2$ and $T>0$ is sufficient. For the evolution problem this bound is derived simultaneously with the aforementioned lower bound on the maximal existence time $T_m^\varepsilon$. For the stationary problem the situation is completely different as the first part of the analysis only provides a bound in $H^2(I)$. Proceeding in the study of the limit $\varepsilon\to 0$ requires a different approach to cope with weaker regularity on $u$ and is presented in Section~\ref{sec2}. As in \cite{LW_COCV} it is based on the observation that, since $\partial_x u$ vanishes on the boundary when $\beta>0$, the same regularity of the right-hand side of \eqref{equ} as in \cite{ELW14, LW14a} can be derived for less regular $u$. 

From now on we tacitly assume that
\begin{equation}
\beta>0\ , \quad \tau\ge 0 \ , \quad a\ge 0\ , \quad \gamma\ge 0 \ . \label{bach}
\end{equation} 
For further use, given $s>0$ and $q\in [1,\infty]$, we set
$$
W_{q,D}^s(I) := \left\{
\begin{array}{ll}
\left\{ v \in W_q^s(I)\ :\ v(\pm 1) = \beta\partial_x v(\pm 1)=0 \right\}\ , & \quad s>(q+1)/q\ , \\
\left\{ v \in W_q^s(I)\ :\ v(\pm 1) =0 \right\}\ , & \quad 1/q<s<(q+1)/q\ , \\
W_q^s(I) \ , & \quad s<1/q\ ,
\end{array}
\right. 
$$
and $H_D^s(I) := W_{2,D}^s(I)$. Also, for $\kappa \in (0,1)$, we define
$$
S_q^s(\kappa) := \left\{ v \in W_{q,D}^s(I)\ :\ v>-1+\kappa \;\text{ in }\; I \;\;\text{ and }\;\; \|v\|_{W_q^s} < \frac{1}{\kappa} \right\}\ .
$$ 

The first result deals with the vanishing aspect ratio limit for the stationary problem and the starting point is the existence result obtained in \cite{LW_COCV}. Let us first recall that \eqref{eqpsi}-\eqref{inu} has a variational structure and that stationary solutions are critical points of the total energy
\begin{equation}
\mathcal{E}_\e(v) := \mathcal{E}_m(v) - \lambda \mathcal{E}_{e,\e}(v)\ . \label{E}
\end{equation}
In \eqref{E}, the mechanical energy is defined by
\begin{equation}
\mathcal{E}_m(v) := \frac{\beta}{2} \|\partial_x^2 v\|_2^2 + \frac{1}{2} \left( \tau + \frac{a}{2} \|\partial_x v\|_2^2 \right) \|\partial_x v\|_2^2 \label{Em}
\end{equation}
and the electrostatic energy by
\begin{equation}
\mathcal{E}_{e,\e}(v) := \int_{\Omega(v)} \left( \varepsilon^2 |\partial_x \psi_v|^2 + |\partial_z \psi_v|^2 \right)\ \rd(x,z) \ , \label{Ee}
\end{equation}
where $\Omega(v) := \{ (x,z)\in I \times (-1,\infty)\ :\ -1 < z < v(x)\}$ and $\psi_v$ denotes the solution to the rescaled Poisson equation 
\begin{equation}
\varepsilon^2 \partial_x^2 \psi_v + \partial_z^2 \psi_v = 0 \;\;\text{ in }\;\; \Omega(v)\ , \qquad \psi_v(x,z) = \frac{1+z}{1+v(x)}\ , \quad (x,z)\in\partial\Omega(v)\ . \label{rP}
\end{equation}
Clearly, $\psi_v$ depends on $v$ in a nonlocal and nonlinear way. Observe that all the above quantities are well defined for $v\in H^2(I)$ satisfying $v>-1$ in $[-1,1]$. It is easy to see that, setting $\phi(x) := - (1-x^2)^2$ for $x\in [-1,1]$, there holds $\mathcal{E}_\e(\theta\phi)\to -\infty$ as $\theta\nearrow 1$. Consequently, $\mathcal{E}_\e$ is not bounded from below. Nevertheless, stationary solutions can be constructed by a constrained minimization approach. We recall the result obtained in \cite{LW_COCV}.

\begin{proposition}\label{prop1}
Let $\varepsilon>0$ and $\varrho\in (2,\infty)$. There are
\begin{itemize}
\item $u_{\varrho,\varepsilon}\in H_D^4(I)$ satisfying $-1 < u_{\varrho,\varepsilon}<0$ in $I$, 
\item $\psi_{\varrho,\varepsilon}\in H^2(\Omega(u_{\varrho,\varepsilon}))$ satisfying $\psi_{\varrho,\varepsilon} = \psi_{u_{\varrho,\varepsilon}}$, 
\item and $\lambda_{\varrho,\varepsilon}>0$ 
\end{itemize}
such that $(u_{\varrho,\varepsilon}, \psi_{\varrho,\varepsilon})$ is a stationary solution to \eqref{eqpsi}-\eqref{bcu} with $\lambda=\lambda_{\varrho,\varepsilon}$. Moreover, both $u_{\varrho,\varepsilon}$ and $\psi_{\varrho,\varepsilon}$ are even with respect to $x$.
In addition,
\begin{equation}
\mathcal{E}_m(u_{\varrho,\varepsilon}) = \min\{ \mathcal{E}_m(u)\ :\ u\in\mathcal{A}_{\varrho,\varepsilon} \} \label{s1a}
\end{equation}
where
\begin{equation}
\mathcal{A}_{\varrho,\varepsilon} := \left\{ v \in H^2_D(I)\ :\ -1<v\le 0 \;\text{ in }\; I , \ v \;\text{ is even and }\; \mathcal{E}_{e,\e}(v)=\varrho \right\}\ . \label{s1b}
\end{equation}
\end{proposition}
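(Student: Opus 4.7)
The plan is to use the direct method of the calculus of variations combined with a Lagrange multiplier argument. A preliminary step is to see that $\mathcal{A}_{\varrho,\varepsilon}$ is non-empty: with $\phi(x) = -(1-x^2)^2$ one computes $\mathcal{E}_{e,\varepsilon}(0) = 2$ (since $\psi_0(x,z)=1+z$), whereas $\mathcal{E}_{e,\varepsilon}(\theta\phi) \to +\infty$ as $\theta\nearrow 1$ because the gap $1 + \theta\phi(0)$ closes and forces $\partial_z \psi_{\theta\phi}$ to blow up near $x = 0$. Continuity of $\theta\mapsto \mathcal{E}_{e,\varepsilon}(\theta\phi)$ on $[0,1)$, together with $\varrho>2$ and the intermediate value theorem, then yields $\theta^*\in(0,1)$ with $\theta^*\phi\in\mathcal{A}_{\varrho,\varepsilon}$, and this element is even.

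Next, pick a minimizing sequence $(v_k)$ for $\mathcal{E}_m$ over $\mathcal{A}_{\varrho,\varepsilon}$. Clamped boundary conditions and control of $\|\partial_x^2 v_k\|_2$ give an $H^2$ bound by Poincar\'e's inequality, and after extracting a subsequence I obtain weak convergence to some $u_{\varrho,\varepsilon}\in H_{D}^{2}(I)$ which is even and satisfies $-1\le u_{\varrho,\varepsilon}\le 0$, together with strong convergence in $C^1([-1,1])$ by Rellich. To pass to the limit in the constraint $\mathcal{E}_{e,\varepsilon}(v_k)=\varrho$ I would pull back \eqref{rP} to the fixed cylinder $I\times(-1,0)$ via $(x,z)\mapsto(x,(1+z)/(1+v(x))-1)$; this converts \eqref{rP} into a uniformly elliptic problem on a fixed domain whose coefficients depend continuously on $(v,\partial_x v)\in C^1$ provided $1+v$ stays bounded away from zero. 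Standard elliptic continuity estimates then give continuity of $v\mapsto \mathcal{E}_{e,\varepsilon}(v)$ along the sequence, hence $\mathcal{E}_{e,\varepsilon}(u_{\varrho,\varepsilon})=\varrho$. The strict lower bound $u_{\varrho,\varepsilon}>-1$ has to be secured from the finiteness of the electrostatic energy: if $1+u_{\varrho,\varepsilon}$ vanished at some point, one would recover the blow-up observed for $\theta\phi$ above and contradict $\mathcal{E}_{e,\varepsilon}(u_{\varrho,\varepsilon})=\varrho<\infty$. Weak $H^2$ lower semicontinuity of $\mathcal{E}_m$ then guarantees that $u_{\varrho,\varepsilon}$ realizes the minimum in \eqref{s1a}.

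The Euler--Lagrange equation is obtained from the Lagrange multiplier theorem applied to the constraint $\mathcal{E}_{e,\varepsilon}=\varrho$. The derivative $\mathcal{E}_{e,\varepsilon}'(v)$ is computed by a Hadamard-type shape derivative on the moving domain and equals $-(\varepsilon^{2}|\partial_x\psi_v|^{2}+|\partial_z\psi_v|^{2})(\cdot,v(\cdot))$ at the top boundary, which is precisely the right-hand side of \eqref{equ}. This trace is strictly positive (harmonic lifting argument for $\psi_v$), ensuring non-degeneracy of the constraint and producing a multiplier $\lambda_{\varrho,\varepsilon}$; testing the Euler--Lagrange equation against a non-positive perturbation respecting the unilateral constraint $v\le 0$ then forces $\lambda_{\varrho,\varepsilon}>0$. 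The strict inequality $u_{\varrho,\varepsilon}<0$ follows \emph{a posteriori}: at any interior maximum where $u_{\varrho,\varepsilon}=0$, the stationary version of \eqref{equ} would read $\beta\partial_x^4 u_{\varrho,\varepsilon}-(\tau+a\|\partial_x u_{\varrho,\varepsilon}\|_2^{2})\partial_x^{2}u_{\varrho,\varepsilon}<0$, which is incompatible with $u_{\varrho,\varepsilon}$ attaining its maximum there. Regularity $u_{\varrho,\varepsilon}\in H_{D}^{4}(I)$ follows by bootstrapping in the Euler--Lagrange equation once the $H^{2}(\Omega(u_{\varrho,\varepsilon}))$ regularity of $\psi_{u_{\varrho,\varepsilon}}$ is established on the pulled-back fixed cylinder, and evenness is preserved throughout by working in the even subspace from the outset.

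The step I expect to be the main obstacle is the regularity and continuity theory for the map $v\mapsto\psi_v$: everything (passing to the limit in the constraint, computing $\mathcal{E}_{e,\varepsilon}'$, and ultimately gaining $H^{2}(\Omega(u_{\varrho,\varepsilon}))$ for $\psi_{u_{\varrho,\varepsilon}}$) rests on elliptic estimates on the moving, only $H^{2}$-regular domain $\Omega(v)$. This requires a careful transformation to a fixed rectangle and control of the transformed coefficients in terms of $\|v\|_{H^{2}}$ and $\min(1+v)$, with constants tracked uniformly along the minimizing sequence.
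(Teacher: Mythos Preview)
The paper does not prove this proposition; it is quoted from \cite{LW_COCV} (announced as ``We recall the result obtained in \cite{LW_COCV}''). Your outline---direct method on the constraint set followed by a Lagrange multiplier argument---is the natural route and almost certainly the one taken in \cite{LW_COCV}.

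There is, however, a genuine gap in your compactness step. You note that continuity of $v\mapsto\mathcal{E}_{e,\varepsilon}(v)$ via the pull-back requires $1+v$ to stay bounded away from zero, and then argue that the limit satisfies $u_{\varrho,\varepsilon}>-1$ \emph{because} $\mathcal{E}_{e,\varepsilon}(u_{\varrho,\varepsilon})=\varrho<\infty$. This is circular: you cannot pass to the limit in the constraint without first knowing that $\min_{[-1,1]}(1+v_k)\ge\delta>0$ uniformly along the minimizing sequence, and you cannot evaluate $\mathcal{E}_{e,\varepsilon}$ at the limit before that is secured. The missing ingredient is a quantitative a~priori bound
\[
\min_{[-1,1]}(1+v)\ \ge\ c\bigl(\varrho,\|v\|_{H^2}\bigr)>0\qquad\text{for all }v\in\mathcal{A}_{\varrho,\varepsilon},
\]
obtained by combining the lower energy estimate $\int_{-1}^{1}(1+v)^{-1}\,\mathrm{d}x\le\mathcal{E}_{e,\varepsilon}(v)=\varrho$ (this is \eqref{s2} in the present paper, from \cite[Lemma~2.8]{LW_COCV}) with the $C^{1}$-control furnished by the uniform $H^{2}$-bound. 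The paper invokes exactly such an estimate later, citing \cite[Lemma~3.3]{LW_COCV}. Once this uniform separation from $-1$ is in hand, your pull-back and continuity argument goes through and the remainder of the sketch (Lagrange multiplier, sign of $\lambda_{\varrho,\varepsilon}$, strict negativity of $u_{\varrho,\varepsilon}$, bootstrap to $H^{4}_{D}$) is essentially sound, modulo some care with the obstacle $v\le 0$ when writing the Euler--Lagrange relation as an equation rather than a variational inequality.
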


Given $\varepsilon>0$ another approach to construct (a smooth branch of) stationary solutions to \eqref{eqpsi}-\eqref{bcu} for small values of $\lambda$ is based on the Implicit Function Theorem \cite{LW14a}. These solutions actually coincide with the ones from Proposition~\ref{prop1} having an electrostatic energy $\mathcal{E}_{e,\e}(u_{\varrho,\varepsilon})=\varrho$ close to $2$. Note, however, that Proposition~\ref{prop1} provides additional stationary solutions with large electrostatic energies as one can show that  $\lambda_{\varrho,\varepsilon}\rightarrow 0$ as $\varrho\rightarrow \infty$, see \cite{LW_COCV}.

\medskip

We may now state the convergence result for stationary solutions and thus provide a rigorous justification of the stationary small aspect ratio limit.

\begin{theorem}\label{thm2}
Let $\varrho\in (2,\infty)$. There are a sequence $(\varepsilon_k)_{k\ge 1}$ with $\varepsilon_k\to 0$, $\kappa_\varrho\in (0,1)$, $u_\varrho\in S^4_2(\kappa_\varrho)$, and $\lambda_\varrho>0$ such that
\begin{equation*}
\lim_{k\to\infty} \left\{ |\lambda_{\varrho,\varepsilon_k} - \lambda_\varrho| + \| u_{\varrho,\varepsilon_k} - u_\varrho\|_{H^1} \right\} = 0\ ,
\end{equation*} 
where $u_\varrho$ is a solution to the (stationary) small aspect ratio model
\begin{equation}
\beta \partial_x^4 u_\varrho - \left( \tau + a \|\partial_x u_\varrho\|_2^2 \right) \partial_x^2 u_\varrho = - \frac{\lambda_\varrho}{(1+u_\varrho)^2} \;\;\text{ in }\;\; I\ , \qquad u_\varrho(\pm 1) = \beta \partial_x u_\varrho(\pm 1) = 0 \ , \label{vivaldi}
\end{equation}
satisfying
$$
\int_{-1}^1 \frac{\rd x}{1+u_\varrho(x)} = \varrho\ .
$$
In addition, $u_\varrho$ is even with
$$
\mathcal{E}_m(u_\varrho) = \min\{ \mathcal{E}_m(u)\ :\ u\in\mathcal{A}_{\varrho,0} \}\,,
$$
where
$$
\mathcal{A}_{\varrho,0} := \left\{ v \in H^2_D(I)\ :\ -1<v\le 0 \;\text{ in }\; I , \ v \;\text{ is even and }\; \int_{-1}^1 \frac{\rd x}{1+v(x)} =\varrho \right\}\ ,
$$
and 
$$
\lim_{k\to\infty} \int_{\Omega(u_{\varrho,\varepsilon_k})}\left\{ \left| \psi_{\varrho,\varepsilon_k}(x,z) - \frac{1+z}{1+u_\varrho(x)} \right|^2 + \left| \partial_z \psi_{\varrho,\varepsilon_k}(x,z) - \frac{1}{1+u_\varrho(x)} \right|^2 \right\}\ \rd(x,z) = 0\ .
$$
\end{theorem}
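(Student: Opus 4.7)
My plan is to extract from $\{u_{\varrho,\e}\}_{\e>0}$ a subsequential limit lying in the reduced constraint set $\mathcal{A}_{\varrho,0}$, show that this limit minimizes $\mathcal{E}_m$ there, and identify $\lambda_\varrho$ as the limit of $\lambda_{\varrho,\e}$. The whole argument rests on the two-sided comparison
\begin{equation*}
\int_I \frac{\rd x}{1+v(x)} \ \le\ \mathcal{E}_{e,\e}(v) \ \le\ \int_I \frac{\rd x}{1+v(x)} + \frac{\e^2}{3}\int_I \frac{|\partial_x v|^2}{1+v(x)}\,\rd x\ ,
\end{equation*}
valid for smooth $v>-1$, where the lower bound follows from the Cauchy--Schwarz inequality applied slicewise to $z\mapsto \psi_v(x,z)$ (using its prescribed boundary values) and the upper bound from Dirichlet's principle applied to the explicit test function $(1+z)/(1+v(x))$ in \eqref{rP}.

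I would first derive a uniform $H_D^2(I)$-bound on $u_{\varrho,\e}$ by a competitor argument. Fix an even $w\in H_D^2(I)$ with $-1<w\le 0$ and $\int_I \rd x/(1+w)>\varrho$. The continuous map $\theta\mapsto \mathcal{E}_{e,\e}(\theta w)$ runs from $\mathcal{E}_{e,\e}(0)=2<\varrho$ at $\theta=0$ to a value exceeding $\varrho$ at $\theta=1$ (by the lower bound above), so there exists $\theta_\e\in(0,1)$ with $\mathcal{E}_{e,\e}(\theta_\e w)=\varrho$, i.e.\ $\theta_\e w\in\mathcal{A}_{\varrho,\e}$. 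The minimality property of Proposition~\ref{prop1} then yields $\mathcal{E}_m(u_{\varrho,\e})\le \mathcal{E}_m(\theta_\e w)\le \mathcal{E}_m(w)$, so $(u_{\varrho,\e})_\e$ is bounded in $H_D^2(I)$.

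Next I extract $\e_k\to 0$ with $u_{\varrho,\e_k}\rightharpoonup u_\varrho$ in $H^2(I)$ and strongly in $C^1([-1,1])$. Then $u_\varrho$ is even, belongs to $H_D^2(I)$, and satisfies $-1\le u_\varrho\le 0$; the lower bound of the comparison together with Fatou yields $\int_I \rd x/(1+u_\varrho)\le\varrho$. The crucial non-degeneracy step---where the reduced $H^2$-regularity emphasized in the introduction becomes delicate---runs as follows. If $u_\varrho(x_0)=-1$ at some $x_0\in(-1,1)$, then $x_0$ is an interior minimum, so $\partial_x u_\varrho(x_0)=0$; combined with the H\"older continuity $\partial_x u_\varrho\in C^{1/2}$ from $H^2\hookrightarrow C^{1,1/2}$, this forces $|1+u_\varrho(x)|\le C|x-x_0|^{3/2}$, contradicting the integrability of $1/(1+u_\varrho)$. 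Consequently $u_\varrho\ge -1+\kappa_\varrho$ for some $\kappa_\varrho\in(0,1)$, and $C^1$-convergence propagates this to $u_{\varrho,\e_k}\ge -1+\kappa_\varrho/2$ for $k$ large; the upper bound in the comparison then forces $\mathcal{E}_{e,\e_k}(u_{\varrho,\e_k})\to \int_I \rd x/(1+u_\varrho)$, so that $u_\varrho\in\mathcal{A}_{\varrho,0}$.

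Minimality of $u_\varrho$ over $\mathcal{A}_{\varrho,0}$ then follows by a recovery-sequence argument: any competitor $w\in\mathcal{A}_{\varrho,0}$ is approximated by $w_\e=\sigma_\e w\in\mathcal{A}_{\varrho,\e}$ with $\sigma_\e\to 1$, again via the two-sided comparison; weak lower semicontinuity of $\mathcal{E}_m$ gives $\mathcal{E}_m(u_\varrho)\le \liminf_k \mathcal{E}_m(u_{\varrho,\e_k})\le \lim_k \mathcal{E}_m(w_{\e_k})=\mathcal{E}_m(w)$. The Euler--Lagrange equation of the reduced constrained problem is then precisely~\eqref{vivaldi} with some Lagrange multiplier $\lambda_\varrho>0$, and elliptic regularity---using the lower bound $u_\varrho>-1+\kappa_\varrho$---upgrades $u_\varrho$ to $H_D^4(I)\cap S_2^4(\kappa_\varrho)$. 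The convergences $\lambda_{\varrho,\e_k}\to\lambda_\varrho$ and $\psi_{\varrho,\e_k}\to (1+z)/(1+u_\varrho)$ in $L^2$ are obtained through the change of variables $\eta=(1+z)/(1+u(x))$, which flattens $\Omega(u)$ to $I\times(0,1)$ and turns the rescaled Poisson problem into a regular perturbation in $\e^2$ with limit potential $\eta$; uniform elliptic estimates---made possible by the non-degeneracy $u_{\varrho,\e_k}\ge -1+\kappa_\varrho/2$ and by the clamped boundary condition $\partial_x u_{\varrho,\e}(\pm 1)=0$ alluded to in the introduction---deliver convergence of both $\psi_{\varrho,\e_k}$ and the trace of its gradient, hence of the right-hand side of the Euler--Lagrange equation and of $\lambda_{\varrho,\e_k}$. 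The main obstacle is precisely this last step: quantitative control of the perturbed Poisson problem on the moving domain sufficient to pass to the limit in the force term, while working only with the reduced $H^2$-regularity of $u_{\varrho,\e}$.
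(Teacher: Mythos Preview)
Your outline follows the paper's architecture closely: a competitor bound for $\mathcal{E}_m(u_{\varrho,\e})$, compactness in $H^2$, the two-sided comparison \eqref{s2} for the electrostatic energy, a recovery sequence for the minimality of $u_\varrho$, and the flattening transformation for the Poisson analysis. Two points of comparison and one small gap are worth noting.

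For non-degeneracy, you argue \emph{after} passing to the limit, exploiting $H^2\hookrightarrow C^{1,1/2}$ together with the finiteness of $\int_I \rd x/(1+u_\varrho)$, and then propagate the lower bound back along the subsequence via $C^1$-convergence. The paper instead cites a lemma from \cite{LW_COCV} which gives $u_{\varrho,\e}\ge -1+\kappa_0$ \emph{uniformly in $\e$} directly from the constraint $\mathcal{E}_{e,\e}(u_{\varrho,\e})=\varrho$ and the $H^2$-bound (Lemma~\ref{lem3.1}). Your argument is self-contained and perfectly adequate for the theorem as stated; the paper's yields the bound prior to any extraction.

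For the identification of $\lambda_\varrho$, the paper proceeds differently and more directly: it bounds $\lambda_{\varrho,\e}$ uniformly via a non-existence result (no stationary solution exists for $\lambda$ above an $\e$-independent threshold), extracts $\lambda_{\varrho,\e_k}\to\lambda_\varrho$, and then passes to the limit \emph{in the equation} using the convergence $g_{\e_k}(u_{\varrho,\e_k})\to (1+u_\varrho)^{-2}$ in $H^\sigma$ furnished by Proposition~\ref{prop2}. Your route via the Euler--Lagrange equation of the reduced constrained problem has a gap you do not address: the inequality constraint $v\le 0$ in $\mathcal{A}_{\varrho,0}$ may a priori be active (you only know $u_\varrho\le 0$, not $u_\varrho<0$ in $I$), so the first variation yields in general a variational inequality with an additional nonnegative measure supported on $\{u_\varrho=0\}$, not \eqref{vivaldi} outright. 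The paper's approach sidesteps this entirely; since you also intend to pass to the limit in the force term, you can simply define $\lambda_\varrho$ as the subsequential limit and obtain \eqref{vivaldi} directly from the equations for $u_{\varrho,\e_k}$, making the Euler--Lagrange detour unnecessary. You are right that the main technical content is the Poisson analysis; in the paper this is Proposition~\ref{prop2}, whose proof hinges on an energy identity for $\partial_\eta\Phi$ that crucially uses $\partial_x v(\pm 1)=0$.
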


In fact, the convergence to zero of the sequence $(\partial_z \psi_{\varrho,\varepsilon_k} - 1/(1+u_\varrho))_k$ also holds true in $H^1(\Omega(u_{\varrho,\varepsilon_k}) )$.

A by-product of Theorem~\ref{thm2} is the existence of stationary solutions to the small aspect ratio model \eqref{vivaldi} which are minimizers to a constrained variational problem. While the latter seems to be new, the existence of solutions to \eqref{vivaldi} is derived in \cite{COG09, CEGM10, DFG10, EGG10, GLY14,LW14b,  LiY07, LiW08, LiW11} when $I$ is replaced by the unit ball of $\mathbb{R}^N$, $N\ge 1$.

\bigskip

We next turn to the hyperbolic evolution problem and report the following convergence result.

\begin{theorem}\label{Bq}
Let $\gamma>0$ and $\lambda>0$. Given $2\alpha\in (0,1/2)$ and $\kappa\in (0,1)$ let ${\bf u}^0 =(u^0,u^1)\in H_D^{4+2\alpha}(I)\times H_D^{2+2\alpha}(I)$ with $u^0 \in S_2^{2+2\alpha}(\kappa)$.
For $\varepsilon\in (0,1)$ let $(u_\varepsilon,\psi_\varepsilon)$ be the unique solution to \eqref{eqpsi}-\eqref{inu} on the maximal interval of existence $[0,T_m^\varepsilon)$ with $\psi_\varepsilon:=\psi_{u_\varepsilon}$. There are $T>0$ and $\kappa_0\in (0,1)$ such that $T_m^\varepsilon\ge T$ and $u_\varepsilon(t)\in S_2^{2+2\alpha}(\kappa_0)$ for all $(t,\varepsilon)\in [0,T]\times (0,1)$. Moreover, for any $\alpha'\in [0,\alpha)$, 
\begin{equation*}
\lim_{\varepsilon\to 0} \left\{ \sup_{t\in [0,T]} \|u_{\varepsilon}(t) - u_0(t)\|_{H^{2+2\alpha'}} + \sup_{t\in [0,T]} \|\partial_t u_\varepsilon(t) - \partial_t u_0(t) \|_{H^{2\alpha'}} \right\} = 0\ ,
\end{equation*}
where 
$$
u_0 \in W_1^2(0,T, H^{2\alpha'}(I)) \cap C^1([0,T],H^{2\alpha'}(I)) \cap C([0,T],H_D^{2+2\alpha'}(I)) \cap L_1(0,T, H_D^{4+2\alpha'}(I)) 
$$ 
is a strong solution to 
\begin{equation}
\gamma^2 \partial_t^2 u_0 + \partial_t u_0 + \beta \partial_x^4 u_0 - \left( \tau + a \|\partial_x u_0\|_2^2 \right)\, \partial_x^2 u_0 = - \frac{\lambda}{(1+u_0)^2} \;\;\text{ in } (0,T)\times I\ , \label{sg4.1}
\end{equation}
supplemented with clamped boundary conditions
\begin{equation}
u_0(t,\pm 1 ) = \beta \partial_x u_0(t,\pm 1) = 0 \ , \qquad t \in (0,T)\ , \label{sg4.2}
\end{equation}
and initial conditions
\begin{equation}
(u_0,\gamma \partial_t u_0)(0) = (u^0, \gamma u^1)\ , \qquad x\in I\ . \label{sg4.3}
\end{equation}
In addition,
\begin{equation}
\lim_{\varepsilon\to 0} \int_{\Omega(u_{\varepsilon}(t))}\left\{ \left| \psi_{\varepsilon}(t,x,z) - \frac{1+z}{1+u_0(t,x)} \right|^2 + \left| \partial_z \psi_{\varepsilon}(t,x,z) - \frac{1}{1+u_0(t,x)} \right|^2 \right\}\ \rd(x,z) = 0 \label{cvpsi}
\end{equation}
for all $t\in [0,T]$.
\end{theorem}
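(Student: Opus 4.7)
The proof proceeds by a continuation/compactness strategy in three stages. For each $\varepsilon\in(0,1)$ the existence of the unique solution $(u_\varepsilon,\psi_\varepsilon)$ on a maximal interval $[0,T_m^\varepsilon)$ is already known (e.g.\ from \cite{LW14a}); what really has to be proved is the uniform lower bound $T_m^\varepsilon\ge T$ together with the uniform stay-away-from-the-singularity estimate $u_\varepsilon(t)\in S_2^{2+2\alpha}(\kappa_0)$, and then the convergence.

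\textbf{Stage I: $\varepsilon$-uniform a priori estimate.} I would fix $\kappa_0\in(0,\kappa/2)$ and a constant $R$ bounding the initial data in $H^{4+2\alpha}_D(I)\times H^{2+2\alpha}_D(I)$, and introduce
$$
\tau_\varepsilon := \sup\!\left\{ t\in[0,T_m^\varepsilon)\,:\, u_\varepsilon(s)\in S_2^{2+2\alpha}(\kappa_0),\ \|\partial_t u_\varepsilon(s)\|_{H^{2\alpha}}\le R \;\text{ for }\; s\in[0,t]\right\}.
$$
The key step is to control the right-hand side of \eqref{equ} in $H^{2\alpha}(I)$ uniformly in $\varepsilon$ on $[0,\tau_\varepsilon)$. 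For this I would use the hodograph transformation $\eta:=(1+z)/(1+u_\varepsilon(t,x))$ that flattens $\Omega(u_\varepsilon(t))$ to $I\times(0,1)$; the rescaled potential $\Psi_\varepsilon(t,x,\eta):=\psi_\varepsilon(t,x,(1+u_\varepsilon)\eta-1)$ satisfies a uniformly elliptic equation with coefficients depending polynomially on $u_\varepsilon$, $\partial_x u_\varepsilon$ and on $\varepsilon$, with the simple boundary datum $\Psi_\varepsilon=\eta$. The bound on $u_\varepsilon$ in $S_2^{2+2\alpha}(\kappa_0)$ together with the clamped condition $\partial_x u_\varepsilon(\pm1)=0$ (available because $\beta>0$) is precisely what is needed, as exploited in \cite{LW_COCV,ELW14}, to produce an $\varepsilon$-independent bound of $\varepsilon^2|\partial_x\psi_\varepsilon(t,\cdot,u_\varepsilon(t,\cdot))|^2+|\partial_z\psi_\varepsilon(t,\cdot,u_\varepsilon(t,\cdot))|^2$ in $H^{2\alpha}(I)$. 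Inserting this estimate into \eqref{equ}, viewed as a damped wave equation driven by a forcing in $H^{2\alpha}(I)$, and applying maximal regularity for the clamped bilaplacian $\beta\partial_x^4$ together with a hyperbolic energy identity, yields a bound on $\|u_\varepsilon(t)\|_{H^{2+2\alpha}_D}+\gamma\|\partial_t u_\varepsilon(t)\|_{H^{2\alpha}}$ depending only on $(\kappa_0,R)$. A Gronwall argument then shows that these bounds are strictly better than the ones defining $\tau_\varepsilon$ on some uniform interval $[0,T]$, so $\tau_\varepsilon\ge T$ for every $\varepsilon\in(0,1)$.

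\textbf{Stage II: compactness and convergence of the potential.} Given the uniform bounds of Stage I on $[0,T]$, the equation provides a uniform bound on $\partial_t^2 u_\varepsilon$ in $L_1(0,T,H^{2\alpha'}(I))$ for any $\alpha'<\alpha$; Aubin-Lions then extracts a subsequence $(\varepsilon_k)$ with $u_{\varepsilon_k}\to u_0$ in $C([0,T],H^{2+2\alpha'}(I))$ and $\partial_t u_{\varepsilon_k}\to\partial_t u_0$ in $C([0,T],H^{2\alpha'}(I))$, with $u_0(t)\in S_2^{2+2\alpha}(\kappa_0)$. Working again with the rescaled potential $\Psi_{\varepsilon_k}$ on the fixed rectangle, I would test the elliptic equation satisfied by $\Psi_{\varepsilon_k}-\eta$ against itself: the explicit factor $\varepsilon_k^2$ in front of the $\partial_x$ coefficients, combined with the $C^1$-convergence of $u_{\varepsilon_k}$, forces $\Psi_{\varepsilon_k}\to\eta$ in $H^1(I\times(0,1))$. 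Transforming back, this is exactly \eqref{cvpsi}.

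\textbf{Stage III: identification of the limit and uniqueness.} With these convergences the electrostatic trace in the right-hand side of \eqref{equ} converges to $\lambda/(1+u_0)^2$, and passing to the limit in \eqref{equ}, \eqref{bcu}, \eqref{inu} shows that $u_0$ is a strong solution of \eqref{sg4.1}--\eqref{sg4.3} in the stated class. Since $-\lambda/(1+v)^2$ is locally Lipschitz on $\{v>-1+\kappa_0\}$, a standard energy estimate gives uniqueness of such strong solutions, so the full family $(u_\varepsilon)_\varepsilon$ converges, not merely a subsequence. The principal obstacle throughout is Stage I: obtaining an $\varepsilon$-uniform $H^{2\alpha}(I)$ bound on the singularly perturbed electrostatic pressure $\varepsilon^2|\partial_x\psi_\varepsilon|^2+|\partial_z\psi_\varepsilon|^2$ at $z=u_\varepsilon(t,x)$, so that the lower bound on $T_m^\varepsilon$ does not degenerate as $\varepsilon\to 0$; this is exactly the point where the clamped condition $\partial_x u_\varepsilon(\pm1)=0$ from $\beta>0$ is indispensable.
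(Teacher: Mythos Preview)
Your three-stage strategy matches the paper's proof exactly: define a stopping time $\tau_\varepsilon$ (the paper's $T^\varepsilon$), use an $\varepsilon$-uniform $H^{2\alpha}$-bound on the electrostatic pressure $g_\varepsilon(u_\varepsilon)$ coming from the flattening transformation (Proposition~\ref{prop2} in the paper) to show $\tau_\varepsilon\ge T>0$, extract a convergent subsequence by compactness, identify the limit, and conclude full-family convergence by uniqueness of \eqref{sg4.1}--\eqref{sg4.3}.

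The only notable difference is in the mechanics of Stage~I. You propose to combine maximal regularity for $\beta\partial_x^4$, a hyperbolic energy identity, and Gronwall. The paper instead writes the problem as a first-order system $\partial_t\mathbf{u}_\varepsilon+\mathbb{A}_\alpha\mathbf{u}_\varepsilon=F_\varepsilon(\mathbf{u}_\varepsilon)$ on $\mathbb{H}_\alpha=H_D^{2+2\alpha}\times H_D^{2\alpha}$, uses that $-\mathbb{A}_\alpha$ generates a bounded group $\|e^{-t\mathbb{A}_\alpha}\|\le M_\alpha e^{-\omega t}$, and reads off the uniform bound directly from the Duhamel formula
\[
\mathbf{u}_\varepsilon(t)-\mathbf{u}^0=\bigl(e^{-t\mathbb{A}_\alpha}-I\bigr)\mathbf{u}^0+\int_0^t e^{-(t-s)\mathbb{A}_\alpha}F_\varepsilon(\mathbf{u}_\varepsilon(s))\,\mathrm{d}s,
\]
together with the uniform bound $\|F_\varepsilon(\mathbf{u}_\varepsilon(s))\|_{\mathbb{H}_\alpha}\le(\lambda+a)K_3$ on $[0,\tau_\varepsilon)$. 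This replaces your energy/Gronwall step by a two-line estimate and simultaneously delivers the small-data global result $T^\varepsilon=\infty$. In Stage~II the paper uses Arzel\`a--Ascoli (pointwise boundedness in $H^{2+2\alpha}$ plus equicontinuity from $\partial_t u_\varepsilon\in C([0,T],H^{2\alpha})$) rather than Aubin--Lions, and passes to the limit in the Duhamel formula itself to obtain a mild solution, which is then upgraded to a strong solution via \cite[Theorem~6.1.6]{Pa83}; but these are cosmetic variants of what you wrote.
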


The minimal existence time $T>0$ depends only on the parameters from \eqref{bach} and on $\alpha$, $\|u^0\|_{H_D^{4+2\alpha}}$,  $\|u^1\|_{H_D^{2+2\alpha}}$, and $\kappa$. The convergence stated in Theorem~\ref{Bq} is unlikely to extend to arbitrary $T>0$ in general owing to the possibility of the occurrence of a finite time singularity as already mentioned. Still, Theorem~\ref{Bq} holds true for any $T>0$ if $\lambda$, $a$, $\|u^0\|_{H_D^{4+2\alpha}}$, and $\|u^1\|_{H_D^{2+2\alpha}}$ are sufficiently small. Let us also point out the convergence of the whole family $(u_\varepsilon)_{\varepsilon\in (0,1)}$ in contrast to the stationary problem, where the convergence only holds for a subsequence. This is due to the uniqueness of the solution $u_0$ to the limit problem \eqref{sg4.1}-\eqref{sg4.3}. 

The local well-posedness of \eqref{eqpsi}-\eqref{inu} is established in \cite{LW14a} for $a=0$, but the proof extends to $a>0$ as $v \mapsto \|\partial_x v\|_2^2 \partial_x^2 v$ is a locally Lipschitz continuous map from $H^{s+2}(I)$ to $H^{s}(I)$ for all $s>0$. Concerning the small aspect ratio equation \eqref{sg4.1}-\eqref{sg4.3}, most available results are actually devoted to the situation where both bending and self-stretching are neglected, that is, $\beta=a=0$, see \cite{Flo14, KLNT11, KLNT15, LLZ14, LiZ14, LiZ15}. As far as we know, the only contribution taking into account bending ($\beta>0$) with clamped boundary conditions \eqref{sg4.2} is \cite{LW14b} and the proof of local well-posedness performed there extends to the case $a>0$, that is, to \eqref{sg4.1}-\eqref{sg4.3}. The dynamics of \eqref{sg4.1} was also studied with other boundary conditions, namely, pinned boundary conditions $u(t,\pm 1)=\beta \partial_x^2 u(t,\pm 1) =0$ in \cite{CFT14, Guo10} and hinged or Steklov boundary conditions $u(t,\pm 1)=\beta \partial_x^2 u(t,\pm 1) -\beta d (1-\sigma)\partial_x u(t,\pm 1) = 0$, $d>0$, in \cite{CFT14}. 

The last result deals with the parabolic version of \eqref{eqpsi}-\eqref{inu} corresponding to the damping dominated limit $\gamma=0$ when inertia effects are neglected.
 
\begin{theorem}\label{thm3}
Let $\gamma=0$, $a=0$, and $\lambda>0$. Given $2\alpha\in (0,2)$ and $\kappa\in (0,1)$ let $u^0\in S_2^{2+2\alpha}(\kappa)$. For $\varepsilon\in (0,1)$ let $(u_\varepsilon,\psi_\varepsilon)$ be the unique solution to \eqref{eqpsi}-\eqref{inu} on the maximal interval of existence $[0,T_m^\varepsilon)$ with $\psi_\varepsilon:=\psi_{u_\varepsilon}$. There are $T>0$ and $\kappa_0\in (0,1)$ such that $T_m^\varepsilon\ge T$ and $u_\varepsilon(t)\in S_2^{2+2\alpha}(\kappa_0)$ for all $(t,\varepsilon)\in [0,T]\times (0,1)$. Moreover, for any $\alpha'\in [0,\alpha)$, 
\begin{equation*}
\lim_{\varepsilon\to 0} \sup_{t\in [0,T]} \|u_{\varepsilon}(t) - u_0(t)\|_{H^{2+2\alpha'}}  = 0\ , 
\end{equation*}
where 
$$
u_0 \in C^1([0,T],L_2(I)) \cap C([0,T],H_D^{2+2\alpha'}(I)) 
$$ 
is a strong solution to 
\begin{equation}
\partial_t u_0 + \beta \partial_x^4 u_0 - \tau \partial_x^2 u_0 = - \frac{\lambda}{(1+u_0)^2} \;\;\text{ in } (0,T)\times I\ , \label{psg4.1}
\end{equation}
supplemented with clamped boundary conditions
\begin{equation}
u_0(t,\pm 1 ) = \beta \partial_x u_0(t,\pm 1) = 0 \ , \qquad t\in (0,T)\ , \label{psg4.2}
\end{equation}
and initial conditions
\begin{equation}
u_0(0) = u^0\ , \qquad x\in I\ . \label{psg4.3}
\end{equation}
In addition, the convergence properties \eqref{cvpsi} are still valid for all $t\in [0,T]$.
\end{theorem}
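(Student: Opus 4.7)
The plan is to follow the same strategy as for Theorem~\ref{Bq} but to exploit the smoothing effect of the biharmonic heat semigroup, which both simplifies the analysis and widens the admissible range of $\alpha$. Set $A := \beta\partial_x^4 - \tau\partial_x^2$ on $L_2(I)$ with domain $H_D^4(I)$. Being self-adjoint and positive definite, $-A$ generates an exponentially decaying analytic semigroup $\{e^{-tA}\}_{t\ge 0}$ on $L_2(I)$ whose fractional powers are compatible with the interpolation scale $(H_D^{4s}(I))_{s\in [0,1]}$, and which satisfies the standard smoothing estimate
$$
\|e^{-tA}\|_{\mathcal{L}(H_D^{2\alpha},H_D^{2+2\alpha})} \le C_\alpha\, t^{-1/2}\,, \qquad t\in (0,T]\,.
$$
Writing the nonlinearity as $g_\e(v)(x) := -\lambda\bigl[\e^2|\partial_x\psi_v(x,v(x))|^2 + |\partial_z\psi_v(x,v(x))|^2\bigr]$, the solution $u_\varepsilon$ satisfies the mild formulation
$$
u_\varepsilon(t) = e^{-tA}u^0 + \int_0^t e^{-(t-s)A} g_\e(u_\varepsilon(s))\,\rd s\,.
$$

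The central ingredient, to be borrowed from the analysis of the rescaled Poisson problem developed in Section~\ref{sec2} (itself resting on the observation from \cite{LW_COCV} that $\partial_x v$ vanishes at $\pm 1$ when $\beta>0$), is the $\varepsilon$-uniform estimate: for every $\kappa\in (0,1)$ there exists $C(\kappa,\alpha)$ such that
$$
\|g_\e(v)\|_{H^{2\alpha}} \le C(\kappa,\alpha)\qquad \text{whenever } v\in S_2^{2+2\alpha}(\kappa) \text{ and } \varepsilon\in (0,1)\,,
$$
together with an analogous local Lipschitz property on $S_2^{2+2\alpha}(\kappa)$. Given this, a Picard/bootstrap argument in $C([0,T],H_D^{2+2\alpha}(I))$ based on the smoothing estimate yields a time $T>0$ and a constant $\kappa_0\in (0,\kappa)$ depending only on the parameters in \eqref{bach}, $\lambda$, $\alpha$, $\kappa$ and $\|u^0\|_{H^{2+2\alpha}}$ such that $\|u_\varepsilon(t)\|_{H^{2+2\alpha}}$ is bounded uniformly on $[0,T]$; the embedding $H^{2+2\alpha}(I)\hookrightarrow C^1(\bar I)$ and the blow-up criterion from \cite{LW14a} then upgrade this to $u_\varepsilon(t)\in S_2^{2+2\alpha}(\kappa_0)$ on $[0,T]$, thereby giving $T_m^\varepsilon\ge T$.

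Equipped with these uniform bounds, the passage to the limit is a compactness argument. Since $u_\varepsilon$ is bounded in $L_\infty(0,T;H_D^{2+2\alpha}(I))$ and, by the equation, $\partial_t u_\varepsilon$ is bounded in $L_\infty(0,T;L_2(I))$, the Arzel\`a--Ascoli theorem together with the compact embedding $H^{2+2\alpha}(I)\hookrightarrow\hookrightarrow H^{2+2\alpha'}(I)$ produces a subsequence $(u_{\varepsilon_k})$ converging in $C([0,T],H^{2+2\alpha'}(I))$ to some $u_0$ belonging to $C([0,T],H_D^{2+2\alpha'}(I))\cap C^1([0,T],L_2(I))$. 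The continuous dependence of $\psi_v$ on $v$ (again from Section~\ref{sec2}), combined with $u_{\varepsilon_k}\to u_0$ in $C^1(\bar I)$, yields $g_{\varepsilon_k}(u_{\varepsilon_k})\to -\lambda/(1+u_0)^2$ in $L_\infty(0,T;L_2(I))$, so that the limit $u_0$ satisfies \eqref{psg4.1}--\eqref{psg4.3} in the strong sense and the convergence \eqref{cvpsi} follows at once. Uniqueness of the strong solution to \eqref{psg4.1}--\eqref{psg4.3} (which holds since $v\mapsto -\lambda/(1+v)^2$ is locally Lipschitz from $S_2^{2+2\alpha'}(\kappa_0)$ into $L_2(I)$) then promotes the convergence to the whole family $(u_\varepsilon)_{\varepsilon\in (0,1)}$.

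The principal technical obstacle is the $\varepsilon$-uniform $H^{2\alpha}$-estimate on $g_\e(v)$ described above. In contrast to Theorem~\ref{Bq}, no second-order-in-time equation must be accommodated, so the rest of the argument (semigroup estimates, bootstrap on $\kappa_0$, compactness, identification of the limit) is a direct and somewhat streamlined adaptation of the hyperbolic case, which explains the enlarged range $2\alpha\in (0,2)$.
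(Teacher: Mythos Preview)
Your overall strategy is correct and coincides with what the paper intends: the paper actually omits the proof, stating only that it is ``similar to that of Theorem~\ref{Bq}'', and your proposal is precisely the natural parabolic adaptation --- replace the group $e^{-t\mathbb{A}_\alpha}$ by the analytic semigroup $e^{-tA}$, derive an $\varepsilon$-uniform bound on $\|u_\varepsilon(t)\|_{H^{2+2\alpha}}$ on a common interval $[0,T]$ via the Duhamel formula, then pass to the limit by compactness and uniqueness.

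There is, however, one genuine imprecision. You claim an $\varepsilon$-uniform bound $\|g_\varepsilon(v)\|_{H^{2\alpha}}\le C(\kappa,\alpha)$ for $v\in S_2^{2+2\alpha}(\kappa)$ ``to be borrowed from \dots Section~\ref{sec2}'', and then pair it with the smoothing $\|e^{-tA}\|_{\mathcal{L}(H_D^{2\alpha},H_D^{2+2\alpha})}\le C_\alpha t^{-1/2}$. But Proposition~\ref{prop2} only yields $\|g_\varepsilon(v)-(1+v)^{-2}\|_{H^\sigma}\le C_{sg}\,\varepsilon^{(1-2\nu)/(3-2\nu)}$ for $\sigma\in[0,1/2)$; the restriction $\sigma<1/2$ stems from the fact that $\Gamma=\partial_\eta\Phi(\cdot,1)$ is controlled only in $H^{1/2}(I)$. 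Hence for $2\alpha\ge 1/2$ the claimed $H^{2\alpha}$ bound does not follow from Section~\ref{sec2} as written. Two clean fixes are available. First, since $H^{2+2\alpha}(I)\hookrightarrow W_q^2(I)$ for some $q>2$ (as in \eqref{z2}), one may invoke the sharper estimates of \cite{ELW14,LW14a} for $v\in S_q^2(\kappa')$, which underlie the well-posedness theory from \cite{LW14a} in the first place. Second --- and this is where analyticity truly pays off --- one may fix any $\sigma\in[0,1/2)$, bound $g_\varepsilon(u_\varepsilon)$ uniformly in $H^\sigma(I)$ via Proposition~\ref{prop2}, and use instead the stronger smoothing
\[
\|e^{-tA}\|_{\mathcal{L}(H_D^{\sigma},H_D^{2+2\alpha})} \le C\, t^{-(2+2\alpha-\sigma)/4}\,,
\]
whose exponent is strictly less than $1$ exactly when $2\alpha<2$. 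This, rather than any additional regularity of $g_\varepsilon$, is the real reason for the enlarged range $2\alpha\in(0,2)$; your closing sentence gestures at this but does not make it explicit. With either adjustment the remainder of your argument goes through unchanged.
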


As before, the minimal existence time $T>0$ depends only on the parameters from \eqref{bach} and on $\alpha$, $\|u^0\|_{H_D^{2+2\alpha}}$, and $\kappa$. It can be taken arbitrarily large provided $\lambda$ and $\|u^0\|_{H_D^{2+2\alpha}}$ are sufficiently small. The proof of Theorem~\ref{thm3} is similar to that of Theorem~\ref{Bq} and will thus be omitted. We just mention that the local well-posedness of \eqref{eqpsi}-\eqref{inu} for $\gamma=0$ and \eqref{psg4.1}-\eqref{psg4.3} are shown in \cite{LW14a} and \cite{LW14b}, respectively. Qualitative results on the behavior of solutions to \eqref{psg4.1}-\eqref{psg4.3} 
may be found in \cite{LiL12, LLS13}.

\section{The rescaled Poisson equation}\label{sec2}

This section is devoted to the study of the stability of solutions to the subproblem \eqref{eqpsi}-\eqref{bcpsi} as $\e\to 0$ when the function $u$ belongs to a suitable class. In fact, given a function $v\in S_q^s(\kappa)$ for some suitably chosen parameters $q>1$, $s\ge 1$, and $\kappa\in (0,1)$, we carefully study the behavior of the solution $\psi_v$ to \eqref{rP} as $\e\to 0$, paying special attention on the dependence upon $v$ with the aim of deriving eventually bounds that only depend on $q$, $s$, and $\kappa$. As already mentioned such an analysis has already been performed in \cite{ELW14, LW13} for $q>2$, $s=2$, and $\kappa\in (0,1)$, but it is not applicable for the proof of Theorem~\ref{thm2} for which we only have an $H^2$-estimate as a starting point. However, as we shall see below, if $v$ is sufficiently smooth with vanishing derivative on the edges of $I$, bounds on $\psi_v$ can be derived when $q=2$ and $s\in (3/2,2)$.  

\begin{proposition}\label{prop2}
Consider $s\in (3/2,2)$, $\nu\in (2-s,1/2)$, $\sigma\in [0,1/2)$, and $\kappa\in (0,1)$. There is a positive constant $C_{sg} = C_{sg}(s,\nu,\sigma,\kappa)$ such that, given $\e\in (0,1)$ and $v\in S_2^s(\kappa)$, the corresponding solution $\psi_v$ to \eqref{rP} satisfies 
\begin{align}
\|\psi_v - b_v\|_{L_2(\Omega(v))} + \left\| \partial_z\psi_v - \frac{1}{1+v} \right\|_{L_2(\Omega(v))} & \le C_{sg}\ \varepsilon\ , \label{errest1} \\
\left\| \partial_x \partial_z\psi_v + \frac{\partial_x v}{(1+v)^2} \right\|_{L_2(\Omega(v))} + \left\| g_\varepsilon(v) - \frac{1}{(1+v)^2} \right\|_{H^\sigma} & \le C_{sg}\ \varepsilon^{(1-2\nu)/(3-2\nu)}\ , \label{errest2} \\
\left\| \partial_z^2\psi_v \right\|_{L_2(\Omega(v))} & \le C_{sg}\ \varepsilon^{4(1-\nu)/(3-2\nu)}\ , \label{errest3}
\end{align}
where
\begin{equation*}
g_\varepsilon(v)(x) := \varepsilon^2 |\partial_x\psi_v(x,v(x))|^2 + |\partial_z \psi_v(x,v(x))|^2 \;\;\;\text{ and }\;\;\; b_v(x,z) := \frac{1+z}{1+v(x)}
\end{equation*}
for $(x,z)\in \Omega(v)$.
\end{proposition}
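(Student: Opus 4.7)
The natural error to analyze is $\Phi := \psi_v - b_v$, which vanishes on $\partial\Omega(v)$ and, since $\partial_z^2 b_v \equiv 0$, satisfies
\[
\varepsilon^2 \partial_x^2 \Phi + \partial_z^2 \Phi = -\varepsilon^2 \partial_x^2 b_v \quad \text{in}\;\; \Omega(v).
\]
For \eqref{errest1} I would multiply this identity by $\Phi$, integrate by parts on both sides (the boundary contributions vanish since $\Phi|_{\partial\Omega(v)} = 0$), and arrive at
\[
\varepsilon^2 \|\partial_x \Phi\|_{L_2(\Omega(v))}^2 + \|\partial_z \Phi\|_{L_2(\Omega(v))}^2 = -\varepsilon^2 \int_{\Omega(v)} \partial_x b_v\, \partial_x \Phi \ \rd(x,z),
\]
so that Young's inequality together with $\|\partial_x b_v\|_{L_2(\Omega(v))} \le C(\kappa)$ (which uses only $\|\partial_x v\|_{L_2}$ and the lower bound $1+v \ge \kappa$) yields $\|\partial_z \Phi\|_{L_2} \le C\varepsilon$; a Poincar\'e inequality in the vertical direction then furnishes the companion bound on $\Phi$ itself.

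For the more delicate estimates \eqref{errest2}--\eqref{errest3}, I would flatten $\Omega(v)$ to the fixed rectangle $R := I \times (0,1)$ via $\eta := (1+z)/(1+v(x))$ and set $w(x,\eta) := \psi_v(x,(1+v(x))\eta - 1) - \eta$. The clamped condition $v(\pm 1) = 0$, combined with the Dirichlet data of $\psi_v$, forces $w \equiv 0$ on all four sides of $R$, and a direct computation turns the original Poisson equation into
\[
\varepsilon^2 \partial_x^2 w + \frac{1 + \varepsilon^2 \eta^2 (\partial_x v)^2}{(1+v)^2}\, \partial_\eta^2 w - 2\varepsilon^2 \frac{\eta \partial_x v}{1+v}\, \partial_x \partial_\eta w = -\varepsilon^2 h_v\,(1 + \partial_\eta w),
\]
with $h_v := 2\eta (\partial_x v)^2/(1+v)^2 - \eta \partial_x^2 v/(1+v)$, the key difficulty being that $\partial_x^2 v \in H^{s-2}(I)$ is merely a distribution for $s<2$. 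The remedy is to transfer the $\partial_x$ derivative off $\partial_x^2 v$ by integrating by parts in $x$ whenever $h_v$ is paired against a test function: this is legitimate because $w$, $\partial_\eta w$, and $\partial_\eta^2 w$ all vanish at $x = \pm 1$, while the remaining factor $\partial_x v$ is controlled in $L_\infty(I)$ through the embedding $H^s \hookrightarrow W_\infty^1$ (available since $s > 3/2$). Testing successively with $w$ (using also $\partial_x w|_{\eta=0,1} = 0$ so that the cross integrations by parts are clean) and with $\partial_\eta^2 w$ yields on the one hand the refined bound $\|\partial_\eta w\|_{L_2(R)} \le C\varepsilon$ and on the other a uniform bound on $\|\partial_\eta^2 w\|_{L_2(R)}$ together with $\varepsilon \|\partial_x \partial_\eta w\|_{L_2(R)}$. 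Interpolation between these two estimates, calibrated by the parameter $\nu \in (2-s, 1/2)$, combined with the pointwise identities $\partial_z \Phi = (1+v)^{-1}\partial_\eta w$, $\partial_z^2 \Phi = (1+v)^{-2}\partial_\eta^2 w$, and an analogous expression for $\partial_x \partial_z \Phi + \partial_x v/(1+v)^2$, then delivers the precise exponents $(1-2\nu)/(3-2\nu)$ and $4(1-\nu)/(3-2\nu)$ after transferring back to $\Omega(v)$.

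The $H^\sigma$-estimate on $g_\varepsilon(v) - (1+v)^{-2}$ rests on the observation that differentiating $\psi_v(x, v(x)) = 1$ in $x$ gives $\partial_x \psi_v(\cdot, v(\cdot)) = -\partial_x v\, \partial_z\psi_v(\cdot, v(\cdot))$, hence $g_\varepsilon(v) = (1 + \varepsilon^2 (\partial_x v)^2) |\partial_z \psi_v(\cdot, v(\cdot))|^2$. The task thus reduces to bounding the trace of $(1+v)^{-1}\partial_\eta w$ at $\eta = 1$ in $H^\sigma(I)$, which follows by combining an anisotropic trace inequality on $R$ with interpolation between the already established $L_2$-in-$\eta$ and $H^1$-in-$\eta$ bounds on $\partial_\eta w$; the restriction $\sigma < 1/2$ ensures the trace is well defined. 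The main obstacle throughout is precisely the bookkeeping of these interpolation exponents in the presence of the distributional source $\partial_x^2 v$: it is this tension that dictates both the admissible range $\nu \in (2-s, 1/2)$ and the resulting $\varepsilon$-powers.
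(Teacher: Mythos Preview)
Your overall plan mirrors the paper's: flatten to the rectangle, test the transformed equation first against $w$ (the paper's $\Phi$) to get $\|\partial_\eta w\|_{L_2}\le C\varepsilon$, then against $\partial_\eta^2 w$ for the second-order quantities, handling the distributional $\partial_x^2 v$ by duality/integration by parts. Your treatment of \eqref{errest1} directly on $\Omega(v)$ is a harmless variant of the paper's rectangle argument.

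The genuine gap is in what you claim the second test produces and how the exponents arise. Testing against $\partial_\eta^2 w$ does \emph{not} yield a uniform bound on $\|\partial_\eta^2 w\|_{L_2}$. After the integrations by parts the identity one gets is
\[
Q^2 := \varepsilon^2\bigl\|\partial_x\partial_\eta w - \eta V\partial_\eta^2 w\bigr\|_{L_2}^2 + \bigl\|(1+v)^{-1}\partial_\eta^2 w\bigr\|_{L_2}^2 = \varepsilon^2\bigl(\mathcal{R}_1^*+\mathcal{R}_2^*\bigr),
\]
where the right-hand side contains the boundary trace $\Gamma:=\partial_\eta w(\cdot,1)$ through the pairings $\langle\partial_x V,\Gamma^2\rangle_{H^{s-2},H^{2-s}}$ and $\langle\partial_x V,\Gamma\rangle_{H^{s-2},H^{2-s}}$, with $V=\partial_x v/(1+v)$. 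The crude estimate $\|\Gamma^2\|_{H^{2-s}}\le C\|\Gamma\|_{H^{1/2}}^2\le C\|\partial_\eta w\|_{H^1}^2\le C(Q/\varepsilon)^2$ feeds back $CQ^2$ on the right and the inequality does not close; so no uniform bound is available to interpolate against.

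What actually happens---and where $\nu$ enters---is the pointwise multiplication $H^{1/2}(I)\cdot H^\nu(I)\hookrightarrow H^{2-s}(I)$ for $\nu>2-s$: one writes $\|\Gamma^2\|_{H^{2-s}}\le C\|\Gamma\|_{H^{1/2}}\|\Gamma\|_{H^\nu}$ and interpolates the $H^\nu$ factor between $\|\partial_\eta w\|_{H^1}\sim Q/\varepsilon$ and the already known $\|\partial_\eta w\|_{L_2}\le C\varepsilon$. This yields the \emph{self-referential} inequality
\[
Q^2 \le c\bigl[\varepsilon^{1-2\nu}Q^{(2\nu+3)/2} + \varepsilon^{2-2\nu}Q^{(2\nu+1)/2} + \varepsilon^2 Q + \varepsilon^3\bigr],
\]
which Young's inequality closes to $Q\le c\,\varepsilon^{4(1-\nu)/(3-2\nu)}$. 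So the interpolation is not between two independent a~priori estimates as your sketch reads, but is embedded inside a single bootstrap for $Q$; the exponents in \eqref{errest2}--\eqref{errest3} are the output of this closure, not of a post-hoc interpolation between a ``uniform'' second-order bound and the first-order $\varepsilon$-bound.
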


The remainder of this section is devoted to the proof of Proposition~\ref{prop2}. To give a hint on how we proceed we first perform some formal computations in Section~\ref{sec21}. A rigorous proof is then given in Section~\ref{sec22}. However, since some computations have already been done in \cite{LW_COCV} we shall not repeat them here but only state their outcome.

\subsection{A formal computation}\label{sec21}
   
Let $\kappa\in (0,1)$ and consider $v\in S_2^s(\kappa)$ for some $s>3/2$. We denote the corresponding solution to \eqref{rP} by $\psi = \psi_v$ and set
$$
\gamma_m(x) := \partial_z \psi(x,v(x))\ , \qquad x\in I\ ,
$$
assuming $\psi$ to be sufficiently smooth so that the above definition is meaningful. We multiply \eqref{rP} by $\partial_z^2\psi$ and integrate over $\Omega(v)$. This gives
$$
\e^2 \int_{\Omega(v)} \partial_x^2 \psi\ \partial_z^2 \psi\ \rd(x,z) + \int_{\Omega(v)} |\partial_z^2 \psi|^2\ \rd(x,z) =0\ .
$$
At least formally, we infer from Green's formula that
\begin{align*}
\int_{\Omega(v)} \partial_x^2 \psi\ \partial_z^2 \psi\ \rd(x,z) & = - \int_{\Omega(v)} \partial_x \psi\ \partial_z^2 \partial_x \psi\ \rd(x,z) - \int_{-1}^1 \left( \partial_x \psi\ \partial_z^2 \psi \right)(x,v(x)) \partial_x v(x)\ \rd x \\
& \qquad + \int_{-1}^0 \left[ \left( \partial_x \psi\ \partial_z^2 \psi \right)(1,z) - \left( \partial_x \psi\ \partial_z^2 \psi \right)(-1,z) \right]\ \rd z\,.
\end{align*}
Owing to the boundary condition $\psi(\pm 1,z)=1+z$ we deduce (at least formally) that $\partial_z^2 \psi(\pm 1,z)=0$ for $z\in (-1,0)$ and thus the last term of the right-hand side of the previous identity vanishes. We then use once more Green's formula and find
\begin{align*}
\int_{\Omega(v)} \partial_x^2 \psi\ \partial_z^2 \psi\ \rd(x,z) & = \int_{\Omega(v)} |\partial_x \partial_z \psi|^2\ \rd(x,z) + \int_{-1}^1 \left( \partial_x \psi\ \partial_x \partial_z \psi \right)(x,-1)\ \rd x \\
& \qquad - \int_{-1}^1 \left( \partial_x \psi\ \partial_x \partial_z \psi \right)(x,v(x))\ \rd x - \int_{-1}^1 \left( \partial_x \psi\ \partial_z^2 \psi \right)(x,v(x)) \partial_x v(x)\ \rd x
\end{align*}
Now we note that $\psi(x,-1)=0$ for $x\in I$ so that $\partial_x \psi(x,-1)=0$ and that
$$
\partial_x \gamma_m(x) = \partial_x \partial_z \psi (x,v(x)) + \partial_x v(x)\ \partial_z^2 \psi(x,v(x)) \ , \qquad x\in I\ , 
$$
again formally. Therefore the above identity reduces to
$$
\int_{\Omega(v)} \partial_x^2 \psi\ \partial_z^2 \psi\ \rd(x,z) = \int_{\Omega(v)} |\partial_x \partial_z \psi|^2\ \rd(x,z) - \int_{-1}^1 \partial_x \psi(x,v(x)) \partial_x \gamma_m(x)\ \rd x \ .
$$
Finally, differentiating the boundary condition $\psi(x,v(x))=1$ with respect to $x\in I$ gives 
$$
\partial_x \psi(x,v(x)) = - \gamma_m(x) \partial_x v(x)\ , \qquad x\in I\ ,
$$ 
and we end up with
$$
\int_{\Omega(v)} \partial_x^2 \psi\ \partial_z^2 \psi\ \rd(x,z) = \int_{\Omega(v)} |\partial_x \partial_z \psi|^2\ \rd(x,z) + \int_{-1}^1 \gamma_m(x) \partial_x \gamma_m(x) \partial_x v(x)\  \rd x \ .
$$
Summarizing we have
$$
\int_{\Omega(v)} \left[ \e^2 |\partial_x\partial_z \psi|^2 + |\partial_z^2\psi|^2 \right]\ \rd(x,z) = -\e^2 \int_{-1}^1 \gamma_m(x) \partial_x \gamma_m(x) \partial_x v(x)\  \rd x \ .
$$
Integrating the right-hand side of the above inequality by parts and using $\partial_x v(\pm 1)=0$ we conclude that
\begin{equation}
\int_{\Omega(v)} \left[ \e^2 |\partial_x\partial_z \psi|^2 + |\partial_z^2\psi|^2 \right]\ \rd(x,z) = \frac{\e^2}{2} \int_{-1}^1 \gamma_m(x)^2 \partial_x^2 v(x)\  \rd x\,. \label{f1}
\end{equation}
Note that the left-hand side of the above identity is equivalent to the $L_2(\Omega(v))$-norm of the gradient of $\partial_z \psi$. The key observation is now that the continuity of pointwise multiplication (see \cite[Theorem~4.1 \& Remark~4.2(d)]{Am91})
$$
H^{1/2}(I) \cdot H^{\nu}(I) \longrightarrow H^{2-s}(I)\ , \qquad 2-s<\nu < 1/2\ , 
$$
and the properties of the trace operator guarantee that $\gamma_m^2\in H^{2-s}(I)$ with  
\begin{align}
\left\| \gamma_m^2 \right\|_{H^{2-s}} & \le C\, \|\gamma_m\|_{H^{1/2}} \|\gamma_m\|_{H^\nu} \le C(\Omega(v))\, \|\partial_z\psi\|_{H^1(\Omega(v))} \|\partial_z\psi\|_{H^{(1+2\nu)/2}(\Omega(v))} \nonumber\\
& \le C(\Omega(v))\, \|\partial_z\psi\|_{H^1(\Omega(v))}^{(3+2\nu)/2} \|\partial_z\psi\|_{L_2(\Omega(v))}^{(1-2\nu)/2} \,.\label{f2}
\end{align}
Combining \eqref{f1} and \eqref{f2} we are led to 
\begin{align*}
\min\{\e^2, 1\} \|\partial_z\psi\|_{H^1(\Omega(v))}^2 & \le C\, \e^2 \left\| \gamma_m^2 \right\|_{H^{2-s}} \|\partial_x^2 v\|_{H^{s-2}}  + \varepsilon^2 \|\partial_z\psi\|_{L_2(\Omega(v))}^2 \\
& \le C(\Omega(v)) \e^2 \|\partial_z\psi\|_{H^1(\Omega(v))}^{(3+2\nu)/2} \|\partial_z\psi\|_{L_2(\Omega(v))}^{(1-2\nu)/2} \|v\|_{H^{s}}  + \varepsilon^2 \|\partial_z\psi\|_{L_2(\Omega(v))}^2 \ ,
\end{align*}
hence
$$
\min\{\e^2, 1\} \|\partial_z\psi\|_{H^1(\Omega(v))}^{(1-2\nu)/2} \le C(\Omega(v),\kappa) \e^2 \|\partial_z\psi\|_{L_2(\Omega(v))}^{(1-2\nu)/2}\ .
$$
We have thus shown that the $H^1(\Omega(v))$-norm of $\partial_z \psi$ can be controlled by its $L_2(\Omega(v))$-norm. It remains to justify rigorously the above computations and show that the constant stemming from the continuity of the trace operator which depends on $\Omega(v)$ actually only depends on $\kappa$. This is the purpose of the next section.

\subsection{Proof of Proposition~\ref{prop2}}\label{sec22}

Let $s\in (3/2,2]$, $\kappa\in (0,1)$, and $v\in S_2^s(\kappa)$. We denote the corresponding solution to \eqref{rP} by $\psi=\psi_v$ and recall that $\psi\in H^s(\Omega(v))$ by \cite[Corollary~4.2]{LW_COCV}. As in previous works, to cope with the dependence of the domain on $v$ we map the domain $\Omega(v)$ onto the rectangle $\Omega := (-1,1)\times (0,1)$ with the help of the transformation 
$$
T_v(x,z) := \left( x , \frac{1+z}{1+v(x)} \right)\ , \qquad (x,v)\in \Omega(v)\ .
$$ 
We then define 
\begin{equation}
\Phi(x,\eta) = \Phi_v(x,\eta) := \psi \circ T_v^{-1}(x,\eta) - \eta = \psi(x,-1+\eta(1+v(x))) - \eta\ , \qquad (x,\eta)\in \Omega\ , \label{c0}
\end{equation}
and recall that $\Phi\in H^{\sigma}(\Omega)$ for each $\sigma<s$ and $\partial_\eta\Phi\in H^1(\Omega)$ according to \cite[Proposition~4.1]{LW_COCV}. As a consequence of \eqref{rP} we realize that $\Phi$ solves the Dirichlet problem
\begin{equation}
\mathcal{L}_v \Phi = f_v \;\;\text{ in }\;\;\Omega\ , \qquad \Phi=0 \;\;\text{ on }\;\; \partial\Omega\ , \label{c1}
\end{equation}
the operator $\mathcal{L}_v$ and the function $f_v$ being given by
\begin{equation}
\begin{split}
\mathcal{L}_v w & := \varepsilon^2 \partial_x^2 w - 2\varepsilon^2\eta \frac{\partial_x v(x)}{1+v(x)}\ \partial_x \partial_\eta w + \frac{1+\varepsilon^2\eta^2 (\partial_x v(x))^2}{(1+v(x))^2} \partial_\eta^2 w \\
& \qquad + \varepsilon^2 \eta \left[ 2 \left( \frac{\partial_x v(x)}{1+v(x)} \right)^2 - \frac{\partial_x^2 v(x)}{1+v(x)} \right] \partial_\eta w
\end{split}\label{c2}
\end{equation}
and 
\begin{equation}
f_v(x,\eta) := \varepsilon^2 \eta \left[ \frac{\partial_x^2 v(x)}{1+v(x)} - 2 \left( \frac{\partial_x v(x)}{1+v(x)} \right)^2 \right]\ . \label{c3}
\end{equation}

We now report some useful identities involving $\Phi$.

\begin{lemma}\label{lem1}
We set $V := \partial_x (1+\ln{v}) = \partial_x v/(1+v) \in H^{s-1}(I)$ and $\Gamma := \partial_\eta\Phi(.,1) \in H^{1/2}(I)$.  Then $\Gamma^2\in H^{2-s}(I)$ and there hold
\begin{align}
\e^2 \left\| \partial_x \Phi - \eta V \partial_\eta\Phi \right\|_{L_2(\Omega)}^2 + \left\| \frac{\partial_\eta \Phi}{1+v} \right\|_{L_2(\Omega)}^2 & = \e^2 \int_\Omega V (\Phi+\eta) \left( \partial_x \Phi - \eta V \partial_\eta\Phi \right)\ \rd(x,\eta) \nonumber\\
& \qquad - \e^2 \int_\Omega \eta V^2 \Phi\ \rd(x,\eta)\ , \label{cd}
\end{align}
and
\begin{equation}
Q^2 := \e^2 \left\| \partial_x\partial_\eta \Phi - \eta V \partial_\eta^2 \Phi \right\|_{L_2(\Omega)}^2 + \left\| \frac{\partial_\eta^2 \Phi}{1+v} \right\|_{L_2(\Omega)}^2 = \e^2 \left( \mathcal{R}_1^* +\mathcal{R}_2^* \right)\ , \label{ce}
\end{equation}
with
\begin{align}
\mathcal{R}_1^* & := \frac{1}{2}\, \langle \partial_x V , \Gamma^2  \rangle_{H^{s-2},H^{2-s}} + \int_\Omega V \left( \partial_x\partial_\eta\Phi - \eta V \partial_\eta^2\Phi \right) \partial_\eta\Phi\ \rd(x,\eta)\ , \label{cf} \\
\mathcal{R}_2^* & := \langle \partial_x V , \Gamma  \rangle_{H^{s-2},H^{2-s}} -  \int_\Omega \eta V^2 \partial_\eta^2\Phi\ \rd(x,\eta)\ . \label{cg} 
\end{align}
\end{lemma}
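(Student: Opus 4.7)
The plan is to test the Dirichlet problem $\mathcal{L}_v \Phi = f_v$ from \eqref{c1}--\eqref{c3} against $\Phi$ and $\partial_\eta^2 \Phi$, and to exploit the semi-divergence structure of $\mathcal{L}_v$. Using $\partial_x V = \partial_x^2 v/(1+v) - V^2$ one recasts the operator and the source in the form
\[
\mathcal{L}_v \Phi = \varepsilon^2 (\partial_x - \eta V \partial_\eta) X + \frac{\partial_\eta^2 \Phi}{(1+v)^2}, \qquad f_v = \varepsilon^2 \eta (\partial_x V - V^2),
\]
with $X := \partial_x \Phi - \eta V \partial_\eta \Phi$. The squares of $X$ and $\partial_\eta \Phi/(1+v)$ (respectively of $Y := \partial_x \partial_\eta \Phi - \eta V \partial_\eta^2 \Phi$ and $\partial_\eta^2 \Phi/(1+v)$) in $L_2(\Omega)$ are exactly the left-hand sides of \eqref{cd} (respectively of \eqref{ce}).

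For \eqref{cd}, I would multiply the equation by $\Phi$, integrate over $\Omega$, and exploit $\Phi|_{\partial\Omega} = 0$ together with $\partial_\eta(\eta V) = V$: the semi-divergence part integrates by parts to $-\varepsilon^2 \|X\|_{L_2(\Omega)}^2 + \varepsilon^2 \int_\Omega V X \Phi \, \rd(x,\eta)$ with no boundary contribution, while the $\partial_\eta^2 \Phi/(1+v)^2$ term produces $-\|\partial_\eta \Phi/(1+v)\|_{L_2(\Omega)}^2$. On the source side, one $x$-integration by parts transfers $\partial_x V$ onto $\Phi$ using $\Phi(\pm 1, \eta) = 0$, and the auxiliary identity $\int_\Omega \eta^2 V^2 \partial_\eta \Phi = -2 \int_\Omega \eta V^2 \Phi$ (coming from $\Phi(\cdot, 1) = 0$) then reassembles the right-hand side in the form $\varepsilon^2 \int_\Omega V(\Phi + \eta) X - \varepsilon^2 \int_\Omega \eta V^2 \Phi$ stated in \eqref{cd}.

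For \eqref{ce}, I would multiply by $\partial_\eta^2 \Phi$ and integrate. The three second-order contributions $\varepsilon^2 \partial_x^2 \Phi$, $-2 \varepsilon^2 \eta V \partial_x \partial_\eta \Phi$, and $(1 + \varepsilon^2 \eta^2 V^2)(1+v)^{-2} \partial_\eta^2 \Phi$, tested against $\partial_\eta^2 \Phi$, combine after one $x$- and one $\eta$-integration by parts on $\varepsilon^2 \int \partial_x^2 \Phi \cdot \partial_\eta^2 \Phi$ (boundary terms vanish thanks to $\Phi|_{\partial\Omega} = 0$, which in particular forces $\partial_x \Phi|_{\eta \in \{0,1\}} = 0$) into precisely $Q^2$. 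The remaining first-order piece $\varepsilon^2 \int \eta (V^2 - \partial_x V) \partial_\eta \Phi \cdot \partial_\eta^2 \Phi$ is handled via $\partial_\eta \Phi \cdot \partial_\eta^2 \Phi = \frac{1}{2} \partial_\eta (\partial_\eta \Phi)^2$ followed by an $\eta$-integration by parts, producing the trace $\Gamma^2$ at $\eta = 1$; similarly, $\int f_v \partial_\eta^2 \Phi \, \rd(x,\eta) = \varepsilon^2 \int_{-1}^1 (\partial_x V - V^2) \Gamma \, \rd x$ because $\int_0^1 \eta \partial_\eta^2 \Phi \, \rd\eta = \Gamma$. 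A further $x$-integration by parts on the $V^2$-terms, using $\partial_x \partial_\eta \Phi \cdot \partial_\eta \Phi = \frac{1}{2} \partial_x (\partial_\eta \Phi)^2$ together with $\partial_\eta \Phi|_{x = \pm 1} = 0$, then reassembles the combinations $V Y \partial_\eta \Phi$ and $\eta V^2 \partial_\eta^2 \Phi$ and identifies the duality pairings, yielding $\varepsilon^2 (\mathcal{R}_1^* + \mathcal{R}_2^*)$.

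The main obstacle is twofold. First, since $s \in (3/2, 2)$ one has $\partial_x V \in H^{s-2}(I)$ of negative order, so the duality pairings $\langle \partial_x V, \Gamma \rangle$ and $\langle \partial_x V, \Gamma^2 \rangle$ must be given meaning: for $\Gamma \in H^{1/2}(I) \hookrightarrow H^{2-s}(I)$ this is immediate, while for the quadratic one, the pointwise multiplication $H^{1/2}(I) \cdot H^\nu(I) \hookrightarrow H^{2-s}(I)$ from \cite[Theorem~4.1, Remark~4.2(d)]{Am91} for any $\nu \in (2-s, 1/2)$, combined with $H^{1/2}(I) \hookrightarrow H^\nu(I)$, places $\Gamma^2 \in H^{2-s}(I)$. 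Second, as $\partial_\eta^2 \Phi$ only belongs to $L_2(\Omega)$, several of the integrations by parts in $x$ are merely formal; I would justify them by approximating $v$ by smooth functions in $S_2^s(\kappa')$, verifying \eqref{cd} and \eqref{ce} for the corresponding classical solutions $\Phi_{v_n}$, and passing to the limit using the continuous dependence of $\Phi_v$ and its derivatives on $v$ from \cite[Proposition~4.1, Corollary~4.2]{LW_COCV} together with the continuity of the above pairings and traces.
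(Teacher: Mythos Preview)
Your proposal is correct and follows essentially the same route as the paper: test \eqref{c1} against $\Phi$ for \eqref{cd} and against $\partial_\eta^2\Phi$ for \eqref{ce}, integrate by parts using the homogeneous Dirichlet condition on $\Phi$, place $\Gamma^2$ in $H^{2-s}(I)$ via pointwise multiplication, and justify the formal manipulations by approximating $v$ with smoother functions. The paper proceeds slightly differently in presentation: for \eqref{ce} it does not redo the computation but quotes \cite[Eqs.~(4.14)--(4.17)]{LW_COCV} to obtain $Q^2=\varepsilon^2(\mathcal{R}_1+\mathcal{R}_2)$ with $\mathcal{R}_i=\int_\Omega \eta(\partial_x V-V^2)\,\partial_\eta^{i-1}(\partial_\eta\Phi)\,\partial_\eta^2\Phi$ and then checks $\mathcal{R}_i=\mathcal{R}_i^*$, whereas you give a direct sketch organized around the semi-divergence form $\mathcal{L}_v\Phi=\varepsilon^2(\partial_x-\eta V\partial_\eta)X+(1+v)^{-2}\partial_\eta^2\Phi$; and for $\Gamma^2\in H^{2-s}(I)$ the paper uses the product rule $H^{1/2}(I)\cdot H^{1/2}(I)\hookrightarrow H^\sigma(I)$ for $\sigma<1/2$ directly rather than your detour through $H^\nu$. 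These are cosmetic differences only.
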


\begin{proof}
It first follows from the continuity of pointwise multiplication (see \cite[Theorem~4.1 \& Remark~4.2(d)]{Am91})
$$
H^{1/2}(I) \cdot H^{1/2}(I) \longrightarrow H^{\sigma}(I)\ , \qquad 0 \le \sigma < 1/2\ ,
$$
that $\Gamma^2$ belongs to $H^\sigma(I)$ for all $\sigma \in [0,1/2)$ and in particular to $H^{2-s}(I)$. 

To prove \eqref{cd} and \eqref{ce} we first handle the case where $v$ is more regular, namely we assume that $v\in H^2(I)$ in addition to being in $S_2^s(\kappa)$. We then multiply \eqref{c1} by $\Phi$ and integrate over $\Omega$. Using Green's formula and the boundary condition $\Phi=0$ on $\partial\Omega$, we obtain
$$
\e^2 \left\| \partial_x \Phi - \eta V \partial_\eta \Phi \right\|_{L_2(\Omega)}^2 + \left\| \frac{\partial_\eta \Phi}{1+v} \right\|_{L_2(\Omega)}^2 = \e^2 \int_\Omega \eta (V^2 - \partial_x V) \Phi \left( 1 - \partial_\eta\Phi \right)\ \rd(x,\eta)\ .
$$
Moreover,
$$
- \int_\Omega \eta \Phi \partial_x V\ \rd(x,\eta) = \int_\Omega \eta V \partial_x \Phi\ \rd(x,\eta)
$$
and, employing Green's formula twice,
$$
\int_\Omega \eta \Phi \partial_\eta\Phi\ \partial_x V\ \rd(x,\eta) = -\frac{1}{2} \int_\Omega \Phi^2 \partial_x V\ \rd(x,\eta)  = \int_\Omega V \Phi \partial_x \Phi\ \rd(x,\eta)\ .
$$
Combining the above identities leads us to
\begin{align*}
\e^2 \left\| \partial_x \Phi - \eta V \partial_\eta \Phi \right\|_{L_2(\Omega)}^2 + \left\| \frac{\partial_\eta \Phi}{1+v} \right\|_{L_2(\Omega)}^2 & = \e^2 \int_\Omega V \Phi \left( \partial_x \Phi - \eta V \partial_\eta \Phi \right)\ \rd(x,\eta) \\
& \qquad + \int_\Omega \eta V \left( \partial_x \Phi + V \Phi \right)\ \rd(x,\eta) \ .
\end{align*}
We finally use once more the homogeneous Dirichlet boundary conditions of $\Phi$ and Green's formula to prove that
\begin{align*}
\int_\Omega \eta V^2 \Phi\ \rd(x,\eta) & = 2 \int_\Omega \eta V^2 \Phi\ \rd(x,\eta) - \int_\Omega \eta V^2 \Phi\ \rd(x,\eta) \\ 
& = - \int_\Omega \eta^2 V^2 \partial_\eta \Phi\ \rd(x,\eta) -\int_\Omega \eta V^2 \Phi\ \rd(x,\eta)
\end{align*}
and complete the proof of \eqref{cd}.

We next infer from \cite[Eqs.~(4.14)-(4.16)]{LW_COCV} that 
$$
Q^2 = \e^2 \left\| \partial_x\partial_\eta \Phi - \eta V \partial_\eta^2 \Phi \right\|_{L_2(\Omega)}^2 + \left\| \frac{\partial_\eta^2 \Phi}{1+v} \right\|_{L_2(\Omega)}^2 = \e^2 \left( \mathcal{R}_1 +\mathcal{R}_2 \right)\ , 
$$
with
\begin{align*}
\mathcal{R}_1 & := \int_\Omega \eta \left( \partial_x V - V^2 \right) \partial_\eta\Phi\ \partial_\eta^2\Phi\ \rd(x,\eta)\ , \\
\mathcal{R}_2 & := \int_\Omega \eta \left( \partial_x V - V^2 \right) \partial_\eta^2\Phi\ \rd(x,\eta)\ . 
\end{align*}
We then argue as in the derivation of \cite[Eq.~(4.17)]{LW_COCV} to show that $\mathcal{R}_1$ coincides with $\mathcal{R}_1^*$. Finally, note that
\begin{align*}
\mathcal{R}_2 & = \int_\Omega \partial_x V\ \partial_\eta \left( \eta \partial_\eta \Phi \right)\ \rd(x,\eta) - \int_\Omega \partial_x V \partial_\eta\Phi\ \rd(x,\eta) - \int_\Omega \eta V^2 \partial_\eta^2\Phi\ \rd(x,\eta) \\
& = \int_{-1}^1 \partial_x V \Big[ \eta \partial_\eta \Phi \Big]_{\eta=0}^{\eta=1}\ \rd x - \int_{-1}^1 \partial_x V \left[ \Phi(x,1)-\Phi(x,0) \right]\ \rd x  - \int_\Omega \eta V^2 \partial_\eta^2\Phi\ \rd(x,\eta) \\
& = \int_{-1}^1 \Gamma \partial_x V\ \rd x  - \int_\Omega \eta V^2 \partial_\eta^2\Phi\ \rd(x,\eta)\ ,
\end{align*}
so that $\mathcal{R}_2 = \mathcal{R}_2^*$ as claimed. 

Finally, extending the validity of \eqref{cd} and \eqref{ce} to functions $v$ belonging only to $S_2^s(\kappa)$ is done by an approximation argument as in \cite[Section~4]{LW_COCV} to which we refer.
\end{proof}

\begin{remark}
The boundary conditions $\partial_x v(\pm1)=0$ are used in the derivation of the identity $\mathcal{R}_1^*=\mathcal{R}_1$, see \cite{LW_COCV}.
\end{remark}

\begin{proof}[Proof of Proposition~\ref{prop2}]
On the one hand, one easily checks that the functions $\eta\mapsto -\eta$ and $\eta\mapsto 1-\eta$ solve \eqref{c1} in $\Omega$ with $-\eta \le \Phi(x,\eta) = 0 \le 1-\eta$ for all $(x,\eta)\in \partial\Omega$. We then deduce from the comparison principle that
\begin{equation}
- \eta \le \Phi(x,\eta) \le 1 - \eta\ , \qquad (x,\eta)\in\Omega\ . \label{x1}
\end{equation}
On the other hand, $H^s(I)$ is continuously embedded in $C^1([-1,1])$ and $H^{s-1}(I)$ is an algebra since $s>3/2$. Thus, as $v\in S_2^s(\kappa)$, there is $c_1(\kappa)>0$ depending only on $s$ and $\kappa$ such that
\begin{equation}
\|v\|_{C^1([-1,1])} + \| V \|_{\infty} + \|V\|_{H^{s-1}} \le c_1(\kappa)\ . \label{x2}
\end{equation}
It now follows from \eqref{cd}, \eqref{x1}, \eqref{x2}, and the Cauchy-Schwarz inequality that
\begin{align*}
& \e^2 \left\| \partial_x \Phi - \eta V \partial_\eta\Phi \right\|_{L_2(\Omega)}^2 + \left\| \frac{\partial_\eta \Phi}{1+v} \right\|_{L_2(\Omega)}^2 \\ 
& \qquad \le \frac{\e^2}{2} \int_\Omega \left( \partial_x \Phi - \eta V \partial_\eta\Phi \right)^2 \ \rd(x,\eta) + \frac{\e^2}{2} \int_\Omega V^2 \left[ (\Phi+\eta)^2 - 2 \eta \Phi \right]\ \rd(x,\eta) \\ 
& \qquad \le \frac{\e^2}{2} \left\| \partial_x \Phi - \eta V \partial_\eta\Phi \right\|_{L_2(\Omega)}^2 + \e^2 \left\| V \right\|_2^2 \\
& \qquad \le \frac{\e^2}{2} \left\| \partial_x \Phi - \eta V \partial_\eta\Phi \right\|_{L_2(\Omega)}^2 + c_1(\kappa)^2 \e^2\ .
\end{align*}
We have thus shown that
\begin{equation}
\e^2 \left\| \partial_x \Phi - \eta V \partial_\eta\Phi \right\|_{L_2(\Omega)}^2 + \left\| \frac{\partial_\eta \Phi}{1+v} \right\|_{L_2(\Omega)}^2 \le c(\kappa) \e^2\ . \label{ch}
\end{equation}
We next estimate $\mathcal{R}_1^*$. Fix $\nu\in (2-s,1/2)$. Then continuity of pointwise multiplication (see \cite[Theorem~4.1 \& Remark~4.2(d)]{Am91})
$$
H^{1/2}(I) \cdot H^\nu(I) \longrightarrow H^{2-s}(I)
$$
and \eqref{x2} imply that
$$
\left| \langle \partial_x V , \Gamma^2 \rangle_{H^{s-2},H^{2-s}} \right| \le \|\partial_x V \|_{H^{s-2}} \|\Gamma^2\|_{H^{2-s}} \le c \|V\|_{H^{s-1}}\ \|\Gamma\|_{H^{1/2}} \|\Gamma\|_{H^\nu} \le c(\kappa) \|\Gamma\|_{H^{1/2}} \|\Gamma\|_{H^\nu}\ .
$$
We next use the continuity of the trace from $H^\sigma(\Omega)$ in $H^{\sigma-1/2}(\partial\Omega)$ for all $\sigma\in (1/2,1]$ and the fact that the complex interpolation space $[L_2(\Omega) , H^1(\Omega)]_{\sigma}$ coincides with $H^{\sigma}(\Omega)$ (up to equivalent norms) to estimate $\Gamma$ and obtain
\begin{align}
\|\Gamma\|_{H^{1/2}} \le c \|\partial_\eta \Phi\|_{H^1(\Omega)} \;\;\text{ and }\;\; \|\Gamma\|_{H^{\sigma-1/2}} \le c \|\partial_\eta \Phi\|_{H^{\sigma}(\Omega)} \le c \|\partial_\eta \Phi\|_{H^1(\Omega)}^{\sigma} \|\partial_\eta \Phi\|_{L_2(\Omega)}^{1-\sigma}\ . \label{x2.5}
\end{align}
Combining the above estimate with $\eqref{x2.5}$ for $\sigma=(2\nu+1)/2$ gives
$$
\left| \langle \partial_x V , \Gamma^2 \rangle_{H^{s-2},H^{2-s}} \right| \le c(\kappa) \|\partial_\eta \Phi\|_{H^1(\Omega)}^{(2\nu+3)/2} \|\partial_\eta \Phi\|_{L_2(\Omega)}^{(1-2\nu)/2}\ .
$$
Since $\e\in (0,1)$ and $v\in S_2^s(\kappa)$ we further infer from \eqref{x2}, \eqref{ch}, and the definition \eqref{ce} of $Q$ that
\begin{align*}
\left| \langle \partial_x V , \Gamma^2 \rangle_{H^{s-2},H^{2-s}} \right| & \le c(\kappa) \left( \|\partial_x \partial_\eta \Phi\|_{L_2(\Omega)}^{(2\nu+3)/2} + \|\partial_\eta^2 \Phi\|_{L_2(\Omega)}^{(2\nu+3)/2} \right) \|\partial_\eta \Phi\|_{L_2(\Omega)}^{(1-2\nu)/2} + c(\kappa) \|\partial_\eta \Phi\|_{L_2(\Omega)}^2 \\
& \le c(\kappa) \|\partial_x \partial_\eta \Phi - \eta V \partial_\eta^2\Phi \|_{L_2(\Omega)}^{(2\nu+3)/2} \left\| \frac{\partial_\eta \Phi}{1+v} \right\|_{L_2(\Omega)}^{(1-2\nu)/2} \\
& \quad + c(\kappa) \left( \| \eta V \partial_\eta^2\Phi \|_{L_2(\Omega)}^{(2\nu+3)/2} + \|\partial_\eta^2 \Phi\|_{L_2(\Omega)}^{(2\nu+3)/2} \right) \left\| \frac{\partial_\eta \Phi}{1+v} \right\|_{L_2(\Omega)}^{(1-2\nu)/2} \\
& \quad + c(\kappa) \left\| \frac{\partial_\eta \Phi}{1+v} \right\|_{L_2(\Omega)}^2 \\
& \le c(\kappa) \left( \|\partial_x \partial_\eta \Phi - \eta V \partial_\eta^2\Phi \|_{L_2(\Omega)}^{(2\nu+3)/2} + \left\| \frac{\partial_\eta^2 \Phi}{1+v} \right\|_{L_2(\Omega)}^{(2\nu+3)/2} \right) \varepsilon^{(1-2\nu)/2} + c(\kappa) \varepsilon^2 \\
& \le c(\kappa) \left( \frac{Q}{\varepsilon} \right)^{(2\nu+3)/2} \varepsilon^{(1-2\nu)/2} + c(\kappa) \varepsilon^2\ ,
\end{align*}
hence
\begin{equation}
\left| \langle \partial_x V , \Gamma^2 \rangle_{H^{s-2},H^{2-s}} \right| \le c(\kappa) \varepsilon^{-1-2\nu} Q^{(2\nu+3)/2} + c(\kappa) \varepsilon^2\ . \label{x3}
\end{equation}
Also, by \eqref{x2}, \eqref{ch}, the definition \eqref{ce} of $Q$, and the Cauchy-Schwarz inequality,
\begin{align*}
\left| \int_\Omega V \left( \partial_x\partial_\eta \Phi - \eta V \partial_\eta^2 \Phi \right) \partial_\eta \Phi\ \rd(x,\eta) \right| & \le \|\partial_x v\|_\infty  \left\| \partial_x\partial_\eta \Phi - \eta V \partial_\eta^2 \Phi \right\|_{L_2(\Omega)} \left\| \frac{\partial_\eta \Phi}{1+v} \right\|_{L_2(\Omega)} \\
& \le c(\kappa) Q\ .
\end{align*} 
Recalling \eqref{x3} we end up with the following estimate for $\mathcal{R}_1^*$:
\begin{equation}
|\mathcal{R}_1^*| \le c(\kappa) \left[ \varepsilon^{-1-2\nu} Q^{(2\nu+3)/2} + Q + \varepsilon^2 \right]\ . \label{x4}
\end{equation}
We next turn to $\mathcal{R}_2^*$ and first deduce from \eqref{x2} and \eqref{x2.5} (with $\sigma=(2\nu+1)/2)$ that
\begin{align*}
\left| \langle \partial_x V , \Gamma \rangle_{H^{s-2},H^{2-s}} \right| & \le \|\partial_x V \|_{H^{s-2}} \|\Gamma\|_{H^{2-s}} \le c(\kappa) \|\Gamma\|_{H^{\nu}} \\
& \le c(\kappa) \|\partial_\eta\Phi\|_{H^1(\Omega)}^{(2\nu+1)/2} \|\partial_\eta\Phi\|_{L_2(\Omega)}^{(1-2\nu)/2}\ .
\end{align*}
Arguing as in the proof of \eqref{x3} leads us to
\begin{equation}
\left| \langle \partial_x V , \Gamma \rangle_{H^{s-2},H^{2-s}} \right| \le c(\kappa) \varepsilon^{-2\nu} Q^{(2\nu+1)/2} + c(\kappa) \varepsilon\ . \label{x5}
\end{equation}
In addition, it follows from \eqref{x2}, the definition \eqref{ce} of $Q$, and the Cauchy-Schwarz inequality,
$$
\left| \int_\Omega \eta V^2 \partial_\eta^2 \Phi\ \rd(x,\eta) \right| \le \|V\|_\infty \|\partial_x v\|_\infty \left\| \frac{\partial_\eta^2 \Phi}{1+v} \right\|_{L_2(\Omega)} \le c(\kappa) Q\ ,
$$
which gives, together with \eqref{x5},
\begin{equation}
|\mathcal{R}_2^*| \le c(\kappa) \left[ \varepsilon^{-2\nu} Q^{(2\nu+1)/2} + Q + \varepsilon \right]\ . \label{x6}
\end{equation}
We now infer from \eqref{ce}, \eqref{x4}, and \eqref{x6} that
$$
Q^2 \le c(\kappa) \left[ \varepsilon^{1-2\nu} Q^{(2\nu+3)/2} +  \varepsilon^{2-2\nu} Q^{(2\nu+1)/2} + \varepsilon^2 Q + \varepsilon^3 + \varepsilon^4 \right]
$$
and further deduce from Young's inequality (using $2\nu+3<4$ and $\e\in (0,1)$) that
$$
Q^2 \le \frac{Q^2}{2} + c(\kappa) \left[ \varepsilon^{8(1-\nu)/(3-2\nu)} + \varepsilon^3 + \varepsilon^4 \right] \le \frac{Q^2}{2} + c(\kappa) \varepsilon^{8(1-\nu)/(3-2\nu)}\ .
$$
Recalling \eqref{ch} we have thus established that 
\begin{align}
\left\| \partial_x \Phi - \eta V \partial_\eta\Phi \right\|_{L_2(\Omega)} + \frac{1}{\varepsilon} \left\| \frac{\partial_\eta \Phi}{1+v} \right\|_{L_2(\Omega)} & \le c(\kappa) \ , \label{x7} \\
\left\| \partial_x\partial_\eta \Phi - \eta V \partial_\eta^2 \Phi \right\|_{L_2(\Omega)} + \frac{1}{\varepsilon} \left\| \frac{\partial_\eta^2 \Phi}{1+v} \right\|_{L_2(\Omega)} & \le c(\kappa) \varepsilon^{(1-2\nu)/(3-2\nu)} \ , \label{x8}
\end{align}
with $1-2\nu>0$. Several consequences can be derived from \eqref{x7}-\eqref{x8}. First, it readily follows from the boundary condition $\Phi(x,0)=0$ for all $x\in I$ that 
$$
\Phi(x,\eta) = \int_0^\eta \partial_\eta \Phi(x,\xi)\ d\xi\ , \qquad (x,\eta)\in\Omega\ ,
$$
which, together with \eqref{x7}, gives
\begin{equation}
\|\Phi\|_{L_2(\Omega)} \le \|\partial_\eta\Phi\|_{L_2(\Omega)} \le c(\kappa)\varepsilon\ . \label{x9}
\end{equation}
Another straightforward consequence of \eqref{x2}, \eqref{x7}, \eqref{x8}, and the continuity \eqref{x2.5} of the trace operator is 
\begin{equation}
\|\Gamma\|_{H^{1/2}} \le c \left\|\partial_\eta\Phi \right\|_{H^1(\Omega)} \le c(\kappa) \varepsilon^{(1-2\nu)/(3-2\nu)}\ . \label{x10}
\end{equation} 
Now, owing to the relationship \eqref{c0} between $\Phi$ and $\psi$, we realize that, for $(x,z)\in\Omega(v)$ and $\eta=(1+z)/(1+v(x))$,
\begin{align*}
\partial_z \psi(x,z) - \frac{1}{1+v(x)} & = \frac{\partial_\eta\Phi(x,\eta)}{1+v(x)} \ , \\
\partial_x \partial_z \psi(x,z) + \frac{\partial_x v(x)}{(1+v(x))^2} & = \frac{\partial_x\partial_\eta\Phi(x,\eta) - \eta V(x) \partial_\eta^2 \Phi(x,\eta) - V(x) \partial_\eta \Phi(x,\eta)}{1+v(x)} \ , \\
\partial_z^2 \psi(x,z) & = \frac{\partial_\eta^2 \Phi(x,\eta)}{(1+v(x))^2}\ .
\end{align*}
Combining these formulas with \eqref{x7}-\eqref{x9} leads to \eqref{errest1}, the first part of \eqref{errest2}, and \eqref{errest3}. As for the second part of \eqref{errest2} we first recall that, thanks to \eqref{rP}, there holds $\psi(x,v(x))=1$ for $x\in I$ from which we deduce that $\partial_x\psi(x,v(x))=-\partial_x v(x) \partial_z \psi(x,v(x))$, $x\in I$. Thus
$$
g_\e(v) = \left( \frac{1}{(1+v)^2} + \varepsilon^2 V^2 \right)\ \left( 1 + \Gamma \right)^2 = \frac{1}{(1+v)^2} + \frac{\Gamma(2+\Gamma)}{(1+v)^2}  + \varepsilon^2 V^2 (1+\Gamma)^2\ .
$$
Given $\sigma\in [0,1/2)$, continuity of pointwise multiplication (see \cite[Theorem~4.1 \& Remark~4.2(d)]{Am91})
$$
H^{1/2}(I)\cdot H^{1/2}(I) \longrightarrow H^{(1+2\sigma)/4}(I)\,, \qquad H^{s-1}(I)\cdot H^{(1+2\sigma)/4}(I) \longrightarrow H^\sigma(I)
$$
and \eqref{x2} entail that
\begin{align*}
\left\| g_\e(v) - \frac{1}{(1+v)^2} \right\|_{H^\sigma} & \le \left\| \frac{1}{(1+v)^2} \right\|_{H^{s-1}} \|\Gamma(2+\Gamma)\|_{H^{(1+2\sigma)/4}} + \varepsilon^2 \|V^2\|_{H^{s-1}} \|(1+\Gamma)^2\|_{H^{(1+2\sigma)/4}} \\
& \le c(\kappa) \|\Gamma\|_{H^{1/2}} \|2+\Gamma\|_{H^{1/2}} + c(\kappa) \varepsilon^2\ \|1+\Gamma\|_{H^{1/2}}^2 \ .
\end{align*}
Therefore, thanks to \eqref{x10}, 
$$
\left\| g_\e(v) - \frac{1}{(1+v)^2} \right\|_{H^\sigma} \le c(\kappa) \varepsilon^{(1-2\nu)/(3-2\nu)}\ ,
$$
which completes the proof of \eqref{errest2}.
\end{proof}

\begin{remark}\label{rem1}
As already mentioned, estimates similar to \eqref{x7} and \eqref{x8} have already been derived in \cite{ELW14, LW13} when $v$ is more regular, namely $v\in S_q^2(\kappa)$ for some $q>2$ and $\kappa$. The estimates obtained therein are better in the sense that they involve higher powers of $\varepsilon$. A rough explanation for this discrepancy is that we use several times Green's formula in the proof of Proposition~\ref{prop2} to handle less regular functions $v$. This procedure somewhat mixes the $x$-derivative and $\eta$-derivative which do not decay in the same way with respect to $\varepsilon$ and results in the weaker estimates \eqref{x7} and \eqref{x8}.
\end{remark}

\section{Small aspect ratio limit: the stationary case}\label{sec3}

Fix $\varrho>2$. Starting from the outcome of Proposition~\ref{prop1} the first step of the proof of Theorem~\ref{thm2} is to establish bounds on $(\lambda_{\varrho,\varepsilon}, u_{\varrho,\varepsilon}, \psi_{\varrho,\varepsilon})$ which do not depend on $\varepsilon$ small enough.

\begin{lemma}\label{lem3.1}
There are $C_1>0$, $\kappa_0\in (0,1)$, and $\varepsilon_0>0$ such that, for $\varepsilon\in (0,\varepsilon_0)$,
\begin{align}
& -1+\kappa_0 \le u_{\varrho,\varepsilon}(x) \le 0\ , \qquad x\in [-1,1]\ , \label{s3} \\
& \lambda_{\varrho,\varepsilon} + \|u_{\varrho,\varepsilon}\|_{H^2} \le \frac{1}{\kappa_0}\ , \label{s4} \\
& 0 \le \varrho - \int_{-1}^1 \frac{\rd x}{1+u_{\varrho,\varepsilon}(x)} \le C_1 \varepsilon^2\ . \label{s5}
\end{align}
\end{lemma}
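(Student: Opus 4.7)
The upper bound $u_{\varrho,\varepsilon}\leq 0$ is built into the admissible set $\mathcal{A}_{\varrho,\varepsilon}$, so the first task is a uniform $H^2$-bound on $u_{\varrho,\varepsilon}$. The starting point is the algebraic identity
\begin{equation*}
\mathcal{E}_{e,\varepsilon}(v) - \int_{-1}^1 \frac{\rd x}{1+v(x)} = \varepsilon^2 \|\partial_x\psi_v\|_{L_2(\Omega(v))}^2 + \left\|\partial_z\psi_v - \tfrac{1}{1+v}\right\|_{L_2(\Omega(v))}^2 \geq 0,
\end{equation*}
obtained by expanding the square and using $\int_{\Omega(v)}\partial_z\psi_v/(1+v)\,\rd(x,z) = \int_{-1}^1\rd x/(1+v)$, which follows from integrating $\partial_z\psi_v$ in $z$ against the boundary data. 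I would then build an admissible competitor by fixing an even reference $v_\star(x) := -r_\varrho(1-x^2)^2 \in H^4_D(I)$ with $r_\varrho\in(0,1)$ chosen so that $\int\rd x/(1+v_\star) > \varrho$. Since $tv_\star\in S_2^s(\kappa_\star)$ uniformly in $t\in[0,1]$ for every $s\in(3/2,2)$ and some $\kappa_\star>0$, Proposition~\ref{prop2} combined with the displayed identity shows that $|\mathcal{E}_{e,\varepsilon}(tv_\star) - \int\rd x/(1+tv_\star)| = O(\varepsilon^2)$ uniformly in $t$. Together with $\mathcal{E}_{e,\varepsilon}(0)=2<\varrho<\mathcal{E}_{e,\varepsilon}(v_\star)$ for small $\varepsilon$ and the continuity of $t\mapsto\mathcal{E}_{e,\varepsilon}(tv_\star)$, the intermediate value theorem yields $t_\varepsilon\in(0,1)$ with $t_\varepsilon v_\star\in\mathcal{A}_{\varrho,\varepsilon}$; the minimality property \eqref{s1a} then delivers $\mathcal{E}_m(u_{\varrho,\varepsilon})\leq\mathcal{E}_m(v_\star)\leq C$, which is the $H^2$-part of \eqref{s4}.

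The main obstacle is to extract the uniform positivity \eqref{s3} from this $H^2$-bound alone; an $L_\infty$-bound by itself does not preclude $u_{\varrho,\varepsilon}$ grazing $-1$. The displayed identity applied to $u_{\varrho,\varepsilon}$ already yields the one-sided constraint $\int\rd x/(1+u_{\varrho,\varepsilon})\leq\mathcal{E}_{e,\varepsilon}(u_{\varrho,\varepsilon})=\varrho$. On the other hand, the Sobolev embedding $H^2(I)\hookrightarrow C^{1,\alpha}([-1,1])$ for $\alpha\in(0,1/2)$ supplies a uniform Hölder estimate on $\partial_x u_{\varrho,\varepsilon}$. Writing $\delta_\varepsilon:=1+\min_I u_{\varrho,\varepsilon}$ and assuming $\delta_\varepsilon<1$, the clamped boundary conditions force the minimum to be attained at an interior point $x_\varepsilon\in(-1,1)$, where $\partial_x u_{\varrho,\varepsilon}(x_\varepsilon)=0$. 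The Hölder estimate then yields $|\partial_x u_{\varrho,\varepsilon}(x)|\leq C|x-x_\varepsilon|^\alpha$ nearby, and integration gives $u_{\varrho,\varepsilon}(x)\leq -1+2\delta_\varepsilon$ on an interval of length $\gtrsim\delta_\varepsilon^{1/(1+\alpha)}$ around $x_\varepsilon$. Therefore
\begin{equation*}
\varrho \geq \int_{-1}^1 \frac{\rd x}{1+u_{\varrho,\varepsilon}(x)} \geq c\,\delta_\varepsilon^{-\alpha/(1+\alpha)},
\end{equation*}
forcing $\delta_\varepsilon\geq\kappa_0>0$.

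With $u_{\varrho,\varepsilon}\in S_2^s(\kappa_0)$ now in hand, Proposition~\ref{prop2} applies directly and the remaining bounds follow. For the $\lambda$-bound I would test the Euler--Lagrange equation $\beta\partial_x^4 u_{\varrho,\varepsilon} - (\tau+a\|\partial_x u_{\varrho,\varepsilon}\|_2^2)\partial_x^2 u_{\varrho,\varepsilon} = -\lambda_{\varrho,\varepsilon}\,g_\varepsilon(u_{\varrho,\varepsilon})$ against $\phi(x):=(1-x^2)^2\in H^4_D(I)$: integration by parts is clean since both $u_{\varrho,\varepsilon}$ and $\phi$ satisfy the clamped boundary conditions, and the left-hand side is then controlled by a constant depending only on the $H^2$-bound above. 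For the right-hand side $\lambda_{\varrho,\varepsilon}\int\phi\,g_\varepsilon(u_{\varrho,\varepsilon})\,\rd x$, estimate \eqref{errest2} together with $H^\sigma(I)\hookrightarrow L_1(I)$ gives $\|g_\varepsilon(u_{\varrho,\varepsilon})-1/(1+u_{\varrho,\varepsilon})^2\|_{L_1}\to 0$, while $1/(1+u_{\varrho,\varepsilon})^2\geq 1$ yields $\int\phi\,g_\varepsilon(u_{\varrho,\varepsilon})\,\rd x \geq \int_{-1}^1(1-x^2)^2\,\rd x - o(1) \geq c > 0$ for small $\varepsilon$, hence $\lambda_{\varrho,\varepsilon}\leq C$. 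Finally, \eqref{s5} is immediate from the displayed identity of the first paragraph: the non-negativity of its right-hand side gives $\varrho - \int\rd x/(1+u_{\varrho,\varepsilon})\geq 0$, while the comparison $\mathcal{E}_{e,\varepsilon}(v)\leq\varepsilon^2\|\partial_x b_v\|_{L_2(\Omega(v))}^2 + \|\partial_z b_v\|_{L_2(\Omega(v))}^2$ (which follows since $\psi_v$ minimizes the Dirichlet energy among functions with the boundary trace of $b_v$) combined with the explicit computation $\|\partial_x b_v\|_{L_2(\Omega(v))}^2 = \int (\partial_x v)^2/(3(1+v))\,\rd x\leq C/\kappa_0$ yields the matching upper bound $C_1\varepsilon^2$.
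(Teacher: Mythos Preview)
Your argument is correct and arrives at the same conclusions, but you take a more self-contained route than the paper in two places. For the positivity bound \eqref{s3}, the paper simply invokes \cite[Lemma~3.3]{LW_COCV}, whose content is precisely the H\"older--integral argument you spell out: the $H^2$-bound plus the constraint $\int_{-1}^1 (1+u_{\varrho,\varepsilon})^{-1}\,\rd x\le\varrho$ force $1+\min u_{\varrho,\varepsilon}$ to stay away from zero. For the bound on $\lambda_{\varrho,\varepsilon}$, the paper instead quotes the nonexistence threshold \cite[Eq.~(6.12)]{LW14a}, which gives an $\varepsilon$-independent $\Lambda$ above which no stationary solution exists; your testing argument against $\phi=(1-x^2)^2$ is a more elementary alternative, though it relies on Proposition~\ref{prop2} (and hence on \eqref{s3}) to secure the lower bound on $\int_{-1}^1 g_\varepsilon(u_{\varrho,\varepsilon})\phi\,\rd x$, whereas the paper's $\lambda$-bound is independent of the positivity step. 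A minor inefficiency: in your competitor construction you invoke Proposition~\ref{prop2} to get $|\mathcal{E}_{e,\varepsilon}(tv_\star)-\int(1+tv_\star)^{-1}\rd x|=O(\varepsilon^2)$, but the two-sided estimate you need is already furnished by the displayed identity (lower bound) together with the variational comparison $\mathcal{E}_{e,\varepsilon}(v)\le\int(\varepsilon^2|\partial_x b_v|^2+|\partial_z b_v|^2)\,\rd(x,z)$ you derive later (upper bound) --- this is exactly the content of the paper's inequality \eqref{s2}, and Proposition~\ref{prop2} is not required at this stage.
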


\begin{proof}
Setting 
$$
\mathcal{K}^1 := \left\{ v \in H^1_D(I)\ :\ -1 < v \le 0 \;\text{ in }\; I \right\}\ ,
$$
we recall that $\mathcal{E}_{e,\e}\in C(\mathcal{K}^1)$ by \cite[Proposition~2.7]{LW_COCV} and satisfies
\begin{equation}
\int_{-1}^1 \frac{\rd x}{1+v} \le \mathcal{E}_{e,\e}(v) \le \int_{-1}^1 \left( 1 + \e^2 |\partial_x v|^2 \right)\ \frac{\rd x}{1+v}\ , \quad v\in\mathcal{K}^1\ , \label{s2} 
\end{equation}
see \cite[Lemma~2.8]{LW_COCV}. Introducing $\phi(x) := - (1-x^2)^2$, $x\in I$, it readily follows from the continuity of $\mathcal{E}_{e,\e}$ and \eqref{s2} that $\theta\mapsto \mathcal{E}_{e,\e}(\theta\phi)$ continuously maps $(0,1)$ in $(2,\infty)$. Consequently, there is $\theta_{\varrho,\varepsilon}\in (0,1)$ such that $\mathcal{E}_{e,\e}(\theta_{\varrho,\varepsilon}\phi)=\varrho$ and thus $\theta_{\varrho,\varepsilon}\phi$ belongs to the set $\mathcal{A}_{\varrho,\varepsilon}$ introduced in \eqref{s1b}. The variational characterization \eqref{s1a} of $u_{\varrho,\varepsilon}$ then entails that
\begin{equation}
\mathcal{E}_m(u_{\varrho,\varepsilon}) \le \mathcal{E}_m(\theta_{\varrho,\varepsilon}\phi) \le \mathcal{E}_m(\phi)\ . \label{s6}
\end{equation}
Furthermore it follows from \cite[Equation~(6.12)]{LW14a} that there are $\varepsilon_0>0$ and $\Lambda>0$ such that \eqref{eqpsi}-\eqref{bcu} has no stationary solution for $\lambda>\Lambda$ and $\varepsilon\in (0,\varepsilon_0)$. Consequently, $\lambda_{\varrho,\varepsilon}\in (0,\Lambda]$ for $\varepsilon\in (0,\varepsilon_0)$ which, together with \eqref{s6}, gives \eqref{s4}. 

Next, since $u_{\varrho,\varepsilon}\in\mathcal{A}_{\varrho,\varepsilon}$, we infer from Proposition~\ref{prop1}, \eqref{s4}, and \cite[Lemma~3.3]{LW_COCV} that
$$
0 = \max_{[-1,1]} u_{\varrho,\varepsilon} \ge \min_{[-1,1]} u_{\varrho,\varepsilon} \ge - 1 +  \frac{\kappa_0^2}{\varrho (2\kappa_0 + \varrho)^2}\ ,
$$
whence \eqref{s3} by making $\kappa_0$ smaller, if necessary. 

Finally, using again that $u_{\varrho,\varepsilon}\in\mathcal{A}_{\varrho,\varepsilon}$, we deduce from \eqref{s3}, \eqref{s4}, and \eqref{s2} that
$$
0 \le \varrho - \int_{-1}^1 \frac{\rd x}{1+u_{\varrho,\varepsilon}} \le \varepsilon^2 \int_{-1}^1 \frac{|\partial_x u_{\varrho,\varepsilon}|^2}{1+u_{\varrho,\varepsilon}}\ \rd x \le \frac{\e^2}{\kappa_0} \|\partial_x u_{\varrho,\varepsilon}\|_2^2 \le \frac{\e^2}{\kappa_0^3}\ ,
$$
which completes the proof.
\end{proof}

Thanks to the just derived bounds and the analysis performed in Section~\ref{sec2} we are in a position to prove Theorem~\ref{thm2}.

\begin{proof}[Proof of Theorem~\ref{thm2}]
Owing to the compact embedding of $H^2(I)$ in $H^s(I)$, $s\in [1,2)$, and in $C^1([-1,1])$, we infer from Lemma~\ref{lem3.1} that there are a sequence $(\varepsilon_k)_{k\ge 1}$ with $\varepsilon_k\to 0$, $\lambda_\varrho\in [0,1/\kappa_0]$, and $u_\varrho\in H_D^2(I)$ such that
\begin{equation}
u_{\varrho,\varepsilon_k} \rightharpoonup u_\varrho \;\;\text{ in }\;\; H^2(I) \label{s7}
\end{equation} 
and
\begin{equation}
\lim_{k\to\infty} \left\{ |\lambda_{\varrho,\varepsilon_k} - \lambda_\varrho | + \| u_{\varrho,\varepsilon_k} - u_\varrho\|_{H^s} + \| u_{\varrho,\varepsilon_k} - u_\varrho\|_{C^1([-1,1])} \right\} = 0 \label{s8}
\end{equation}
for any $s\in [1,2)$. It readily follows from \eqref{s3}, \eqref{s5}, and \eqref{s8} that 
\begin{equation}
-1 + \kappa_0 \le u_\varrho(x)\le 0\ , \quad x\in I\,,\;\;\text{ and }\;\; \int_{-1}^1 \frac{\rd x}{1+u_\varrho(x)} = \varrho\ . \label{s9}
\end{equation}
Now, fix $s\in (3/2,2)$ and $\nu\in (2-s,1/2)$. According to \eqref{s3} and \eqref{s4}, $u_{\varrho,\varepsilon_k}$ belongs to $S_2^s(\kappa_0)$ and we infer from Proposition~\ref{prop2}, \eqref{s3}, and \eqref{s9} that, for $\sigma\in [0,1/2)$,
\begin{align*}
\left\| g_{\varepsilon_k}(u_{\varrho,\varepsilon_k}) - \frac{1}{(1+u_\varrho)^2} \right\|_{H^{\sigma}} & \le \left\| g_{\varepsilon_k}(u_{\varrho,\varepsilon_k}) - \frac{1}{(1+u_{\varrho,\varepsilon_k})^2} \right\|_{H^{\sigma}} + \left\| \frac{1}{(1+u_{\varrho,\varepsilon_k})^2} - \frac{1}{(1+u_\varrho)^2} \right\|_{H^{\sigma}} \\
& \le c(\kappa_0) \varepsilon_k^{(1-2\nu)/(3-2\nu)} + \left\| \frac{(2+ u_\varrho + u_{\varrho,\varepsilon_k}) (u_\varrho - u_{\varrho,\varepsilon_k})}{(1+u_{\varrho,\varepsilon_k})^2 (1+u_\varrho)^2} \right\|_{H^{\sigma}} \\
& \le c(\kappa_0) \left[ \varepsilon_k^{(1-2\nu)/(3-2\nu)} + \| u_\varrho - u_{\varrho,\varepsilon_k}\|_{H^1} \right]\ .
\end{align*}
We then deduce from \eqref{s8} that
\begin{equation}
\lim_{k\to\infty} \left\| g_{\varepsilon_k}(u_{\varrho,\varepsilon_k}) - \frac{1}{(1+u_\varrho)^2} \right\|_{H^{\sigma}} = 0 \label{s10}
\end{equation}
for all $\sigma\in [0,1/2)$. Thanks to \eqref{s8} and \eqref{s10} it is now straightforward to pass to the limit as $\varepsilon_k\to 0$ in the equation solved by $u_{\varrho,\varepsilon_k}$ and conclude that $u_\varrho$ is a weak solution in $H_D^2(I)$ to \eqref{vivaldi}. However, since the right-hand side of \eqref{vivaldi} belongs to $H^2_D(I)$, classical elliptic regularity results entail that  $u_\varrho$ belongs to $H_D^4(I)$ and is a classical solution to \eqref{vivaldi}. 

To check the minimizing property of $u_\varrho$, we consider $v\in \mathcal{A}_{\varrho,0}$ and observe that the function $\vartheta\mapsto \mathcal{E}_{\varrho,\varepsilon}(\vartheta v)$ continuously maps $(0,1]$ onto $(2,\mathcal{E}_{\varrho,\varepsilon}(v)]$, while $\varrho\le \mathcal{E}_{\varrho,\varepsilon}(v)$ for $\varepsilon>0$ according to \eqref{s2}. Consequently, there is $\vartheta_{\varrho,\varepsilon}\in (0,1]$ such that $\mathcal{E}_{\varrho,\varepsilon}(\vartheta_{\varrho,\varepsilon} v)=\varrho$, that is, $\vartheta_{\varrho,\varepsilon} v\in \mathcal{A}_{\varrho,\varepsilon}$. Recalling \eqref{s1a} gives
$$
\mathcal{E}_m(u_{\varrho,\varepsilon}) \le \mathcal{E}_m(\vartheta_{\varrho,\varepsilon} v) \le \mathcal{E}_m(v)\ .
$$
We then use the lower semicontinuity of $\mathcal{E}_m$ and \eqref{s7} to conclude that $\mathcal{E}_m(u_\varrho) \le \mathcal{E}_m(v)$. This inequality being valid for all $v\in\mathcal{A}_{\varrho,0}$, we have thus proved that $u_\varrho$ is a minimizer of $\mathcal{E}_m$ on $\mathcal{A}_{\varrho,0}$.

Finally the stated convergence properties of $\psi_{\varrho,\varepsilon_k}$ readily follow from Proposition~\ref{prop2} and \eqref{s8}.
\end{proof}

\section{Small aspect ratio limit: the evolutionary case}\label{sec4}

We next focus on the vanishing aspect ratio limit for the evolution problem \eqref{eqpsi}-\eqref{inu}. As pointed out in the introduction, the proof of the parabolic case $\gamma=0$ stated in Theorem~\ref{thm3} is similar to (actually, simpler than)  the hyperbolic case $\gamma>0$. We thus only treat the latter and may assume without loss of generality that $\gamma=1$.  As a first step we recall the well-posedness of \eqref{eqpsi}-\eqref{inu} which  is  established in \cite{LW14a}. Let $2\alpha\in (0,1/2)$ and consider the Hilbert space $\mathbb{H}_\alpha := H_D^{2+2\alpha}(I)\times H^{2\alpha}_D(I)$ and the (unbounded) linear operator $\mathbb{A}_\alpha$ on $\mathbb{H}_\alpha$ with domain $D(\mathbb{A}_\alpha) := H^{4+2\alpha}_D(I)\times H_D^{2+2\alpha}(I)$ defined by
$$
\mathbb{A}_\alpha \mathbf{w} := 
\begin{pmatrix}
0 & -w^1 \\
 & \\
\beta \partial_x^4 w^0 - \tau \partial_x^2 w^0 & w^1
\end{pmatrix} \ , \quad \mathbf{w}=(w^0,w^1)\in D(\mathbb{A}_\alpha)\ .
$$
According to \cite[Chapter~V]{Am95} and \cite{Ar04} the operator $\mathbb{A}_\alpha$ generates a strongly continuous group $(e^{-t\mathbb{A}_\alpha})_{t\in\mathbb{R}}$ on $\mathbb{H}_\alpha$ and \cite{HZ88} entails that the damping term provides an exponential decay, that is, there are $M_\alpha>0$ and $\omega>0$ such that 
\begin{equation}
\|e^{-t\mathbb{A}_\alpha}\|_{\mathcal{L}(\mathbb{H}_\alpha)}\le M_\alpha e^{-\omega t}\ ,\quad t\ge 0\ . \label{R0}
\end{equation}
Let $\e, \lambda>0$, $a\ge 0$, and fix $\kappa\in (0,1)$. Consider $\mathbf{u}^0 := (u^0,u^1)\in D(\mathbb{A}_\alpha)$ such that
\begin{equation}
u^0\in S_2^{2+2\alpha}(\kappa) \;\;\;\text{ and }\;\;\; \|u^1 \|_{H^{2\alpha}} < \frac{1}{\kappa}\ . \label{R01}
\end{equation}
By \cite[Corollary~3.4]{LW14a} there is a unique solution $(u_\varepsilon,\psi_\varepsilon)$ with $\psi_\varepsilon = \psi_{u_\varepsilon}$ to \eqref{eqpsi}-\eqref{inu} defined on the maximal interval of existence $[0,T_m^\varepsilon)$ satisfying
$$
\mathbf{u}_\varepsilon := (u_\varepsilon,\partial_t u_\varepsilon) \in C([0,T_m^\varepsilon),\mathbb{H}_\alpha) \ , \quad  \min_{x\in [-1,1]} u_\varepsilon(t,x) > -1 \,,\quad t\in [0,T_m^\varepsilon)\,,
$$
and
\begin{equation}
\mathbf{u}_\varepsilon(t) = e^{-t\mathbb{A}_\alpha} \mathbf{u}^0 +\int_0^t  e^{-(t-s)\mathbb{A}_\alpha}\, F_\varepsilon(\mathbf{u}_\varepsilon(s))\,\rd s\,, \quad t\in [0,T_m^\varepsilon)\ ,\label{vdk}
\end{equation}
where
\begin{equation}
F_\varepsilon(\mathbf{w}) := 
\begin{pmatrix} 
0\\
-\lambda\, g_\varepsilon(w^0)+ a\|\partial_x w^0\|_2^2\,\partial_x^2 w^0 
\end{pmatrix}\ , \qquad \mathbf{w} \in \mathbb{H}_\alpha\ . \label{rhs}
\end{equation}

After this preparation let us begin the study of the behavior as $\varepsilon\to 0$ by noticing that, setting $\kappa_0:=\kappa/2 \in (0,1/2)$,  the continuity properties of $\mathbf{u}_\varepsilon$ and \eqref{R01} ensure that
\begin{equation}
T^\varepsilon := \sup{\left\{ t\in [0,T_m^\varepsilon)\, :\, u_\varepsilon(s)\in S_2^{2+\alpha}(\kappa_0)\text{ and }
\|\partial_t u_\varepsilon(s)\|_{H^{2\alpha}} < 1/\kappa_0\,\text{ for } s\in [0,t]
 \right\}} > 0\ . \label{pam2}
\end{equation}
Thanks to the continuity of the embeddings of $H^{2+2\alpha}(I)$ in $W_q^2(I)$ for some $q>2$ and in $W_\infty^1(I)$ (the latter with embedding constant denoted by $c_I$), the definition of $T^\varepsilon$ guarantees that there is a positive constant $K_1$ depending only on $\kappa$ and $\alpha$ such that, for all $\varepsilon\in (0,1)$,
\begin{align}
-1 + \kappa_0 \le u_\varepsilon(t,x) & \le \frac{c_I}{\kappa_0}\,, \qquad (t,x)\in [0,T^\varepsilon)\times [-1,1]\,, \label{z1} \\
\|u_\varepsilon(t)\|_{W_q^2(I)} + \|u_\varepsilon(t)\|_{W_\infty^1(I)} & \le K_1\,, \qquad t\in [0,T^\varepsilon)\,. \label{z2}
\end{align}
 
The next step of the proof is to show that $T^\varepsilon$ (and thus also $T_m^\varepsilon$) does not collapse to zero as $\varepsilon\to 0$, so that the solutions $(u_\varepsilon,\psi_\varepsilon)_{\varepsilon\in (0,1)}$ have a common interval of existence.

\begin{lemma}\label{le.pam1}
\begin{itemize}
\item[(i)] There is $T>0$ depending only on  $\lambda$, $\alpha$, $a$, $\kappa$, and $\|\mathbf{u}^0\|_{D(\mathbb{A}_\alpha)}$ such that $T^\varepsilon\ge T$ for all $\varepsilon\in (0,1)$.
\item[(ii)] There is $\delta>0$ depending only on $\alpha$ and $\kappa$ such that $T^\varepsilon=T_m^\varepsilon=\infty$ for all $\varepsilon\in (0,1)$ provided $(\lambda, a , \|\mathbb{A}_\alpha\mathbf{u}^0\|_{\mathbb{H}_\alpha}) \in (0,\delta)\times [0,\delta)^2$.
\end{itemize}
Furthermore, there is $K_2>0$ depending only on $\kappa$ and $\alpha$ such that 
\begin{equation}
\left\| g_\varepsilon(u_\varepsilon(t)) \right\|_{H^{2\alpha}} \le K_2\ , \qquad t\in [0,T^\varepsilon)\ . \label{z3}
\end{equation}
\end{lemma}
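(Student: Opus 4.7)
The overall strategy is to feed the $\varepsilon$-uniform bounds from Proposition~\ref{prop2} into the variation of constants formula \eqref{vdk} and exploit the exponential decay \eqref{R0} of the semigroup to control $\mathbf{u}_\varepsilon(t)-e^{-t\mathbb{A}_\alpha}\mathbf{u}^0$ on a time interval that does not collapse to zero as $\varepsilon\to 0$. I would first settle \eqref{z3}. Fixing $s\in(3/2,2)$ and $\nu\in(2-s,1/2)$, the very definition of $T^\varepsilon$ combined with the embedding $S_2^{2+2\alpha}(\kappa_0)\subset S_2^s(\kappa_0)$ allows me to apply Proposition~\ref{prop2} with $\sigma=2\alpha\in[0,1/2)$, giving
\[
\left\|g_\varepsilon(u_\varepsilon(t))-\frac{1}{(1+u_\varepsilon(t))^2}\right\|_{H^{2\alpha}}\le C_{sg}\,\varepsilon^{(1-2\nu)/(3-2\nu)}\le C_{sg}.
\]
Since $u_\varepsilon(t)\ge -1+\kappa_0$ by \eqref{z1} and $u_\varepsilon(t)\in W_q^2\hookrightarrow C^1$ by \eqref{z2}, the Nemytskii map $v\mapsto(1+v)^{-2}$ maps $u_\varepsilon(t)$ into a bounded set of $H^1(I)\hookrightarrow H^{2\alpha}(I)$, yielding \eqref{z3}.

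Next, for part (i), I would estimate the second component of $F_\varepsilon(\mathbf{u}_\varepsilon)$ in $H^{2\alpha}$: the piece $-\lambda g_\varepsilon(u_\varepsilon)$ is controlled by \eqref{z3}, while the self-stretching term $a\|\partial_x u_\varepsilon\|_2^2\partial_x^2 u_\varepsilon$ is handled by the $H^{2+2\alpha}$-bound on $u_\varepsilon$ coming from $u_\varepsilon(s)\in S_2^{2+2\alpha}(\kappa_0)$. This gives a constant $K_3=K_3(\lambda,a,\alpha,\kappa)$ with $\|F_\varepsilon(\mathbf{u}_\varepsilon(s))\|_{\mathbb{H}_\alpha}\le K_3$ on $[0,T^\varepsilon)$. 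Plugging this into \eqref{vdk} and using \eqref{R0} yields
\[
\|\mathbf{u}_\varepsilon(t)-e^{-t\mathbb{A}_\alpha}\mathbf{u}^0\|_{\mathbb{H}_\alpha}\le \frac{M_\alpha K_3}{\omega}\bigl(1-e^{-\omega t}\bigr),\qquad t\in[0,T^\varepsilon).
\]
Strong continuity of $t\mapsto e^{-t\mathbb{A}_\alpha}\mathbf{u}^0$ in $\mathbb{H}_\alpha$, together with the strict inequalities in \eqref{R01} and the choice $\kappa_0=\kappa/2$, then provides $T=T(\lambda,a,\alpha,\kappa,\|\mathbf{u}^0\|_{D(\mathbb{A}_\alpha)})>0$ such that $\mathbf{u}_\varepsilon(t)$ remains strictly inside the open set defining $T^\varepsilon$ on $[0,T]$; a standard continuation argument then gives $T^\varepsilon\ge T$.

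For part (ii), I would bootstrap the same scheme globally using the decay \eqref{R0}. Since $D(\mathbb{A}_\alpha)\hookrightarrow\mathbb{H}_\alpha$, smallness of $\|\mathbb{A}_\alpha\mathbf{u}^0\|_{\mathbb{H}_\alpha}$ implies smallness of $\|\mathbf{u}^0\|_{\mathbb{H}_\alpha}$. Splitting $F_\varepsilon$ into its $O(\lambda)$ piece (bounded by a constant thanks to the argument leading to \eqref{z3}) and its $O(a)$ piece (cubic in $\|\mathbf{u}_\varepsilon\|_{\mathbb{H}_\alpha}$), the Duhamel estimate
\[
\|\mathbf{u}_\varepsilon(t)\|_{\mathbb{H}_\alpha}\le M_\alpha e^{-\omega t}\|\mathbf{u}^0\|_{\mathbb{H}_\alpha}+\frac{M_\alpha}{\omega}\Bigl(C_0\lambda+C_1 a\sup_{[0,t]}\|\mathbf{u}_\varepsilon\|_{\mathbb{H}_\alpha}^3\Bigr)
\]
fed into a continuity argument shows that for $\lambda$, $a$, and $\|\mathbb{A}_\alpha\mathbf{u}^0\|_{\mathbb{H}_\alpha}$ all below some $\delta(\alpha,\kappa)$, the quantity $\|\mathbf{u}_\varepsilon(t)\|_{\mathbb{H}_\alpha}$ remains below, say, $1/(2\kappa_0)$ for all $t\ge 0$, so $T^\varepsilon=T_m^\varepsilon=\infty$. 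The hard part in all of this—and the reason Section~\ref{sec2} was needed—is the $H^{2\alpha}$-estimate \eqref{z3} on $g_\varepsilon(u_\varepsilon)$ at the low regularity $u_\varepsilon\in H^{2+2\alpha}\subset H^s$ with $s<2$: this rules out the $W_q^2$-based estimates of \cite{ELW14,LW13} and forces the use of Proposition~\ref{prop2}.
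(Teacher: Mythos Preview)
Your argument is essentially correct and close to the paper's, but the organization differs in a way worth pointing out. For \eqref{z3} you proceed exactly as the paper does (Proposition~\ref{prop2} plus a direct bound on $(1+u_\varepsilon)^{-2}$). For part~(i), the paper compares $\mathbf{u}_\varepsilon(t)$ to $\mathbf{u}^0$ rather than to $e^{-t\mathbb{A}_\alpha}\mathbf{u}^0$: it writes $e^{-t\mathbb{A}_\alpha}\mathbf{u}^0-\mathbf{u}^0=-\int_0^t e^{-s\mathbb{A}_\alpha}\mathbb{A}_\alpha\mathbf{u}^0\,\rd s$ and obtains the single estimate
\[
\|\mathbf{u}_\varepsilon(t)-\mathbf{u}^0\|_{\mathbb{H}_\alpha}\le \frac{M_\alpha}{\omega}\bigl(1-e^{-\omega t}\bigr)\bigl(\|\mathbb{A}_\alpha\mathbf{u}^0\|_{\mathbb{H}_\alpha}+(\lambda+a)K_3\bigr)\,.
\]
This is equivalent to your approach for~(i) (your ``strong continuity'' step becomes quantitative precisely through this identity, which is where the dependence on $\|\mathbf{u}^0\|_{D(\mathbb{A}_\alpha)}$ enters), but the payoff is that the \emph{same} inequality immediately yields~(ii): the right-hand side is bounded by $\frac{M_\alpha}{\omega}(1+2K_3)\delta$ uniformly in $t$, and this controls both the $\mathbb{H}_\alpha$-norm \emph{and} the pointwise lower bound $u_\varepsilon>-1+\kappa_0$ via \eqref{R2}. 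Your separate bootstrap for~(ii) works too, but it needs the invertibility of $\mathbb{A}_\alpha$ (true, since \eqref{R0} forces $0\notin\sigma(\mathbb{A}_\alpha)$) and you must choose the threshold small enough---not just below $1/(2\kappa_0)$---to guarantee $u_\varepsilon>-1+\kappa_0$, a constraint you do not mention.

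One correction to your closing remark: for the evolution problem the $W_q^2$-based estimates of \cite{ELW14,LW13} are \emph{not} ruled out, since $H^{2+2\alpha}(I)\hookrightarrow W_q^2(I)$ for some $q>2$ (this is exactly \eqref{z2}). Proposition~\ref{prop2} is strictly needed only in the stationary setting of Section~\ref{sec3}, where the a~priori bound is merely in $H^2$; here it is used for convenience and self-containment.
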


\begin{proof}[{\bf Proof}]
Let $\varepsilon\in (0,1)$ and $t\in [0,T^\varepsilon)$. Since $2\alpha\in (0,1/2)$ and $u_\varepsilon(t)\in S_2^{2+2\alpha}(\kappa_0) \subset S_2^{(3+4\alpha)/2}(\kappa_0)$ we infer from Proposition~\ref{prop2} (with $s=(3+4\alpha)/2$ and $\sigma=2\alpha$), \eqref{z1}, and \eqref{z2} that
\begin{align}
\left\| g_\varepsilon(u_\varepsilon(t)) \right\|_{H^{2\alpha}} & \le \left\| g_\varepsilon(u_\varepsilon(t)) - \frac{1}{(1+u_\varepsilon(t))^2} \right\|_{H^{2\alpha}} + \left\| \frac{1}{(1+u_\varepsilon(t))^2} \right\|_{H^{2\alpha}} \nonumber \\
& \le C_{sg} + C \left\| \frac{1}{(1+u_\varepsilon(t))^2} \right\|_{W_\infty^1} \le K_3 \label{z4}
\end{align}
for some positive constant $K_3$ depending only $\kappa$ and $\alpha$, hence
\begin{equation}
\left\| F_\varepsilon(\mathbf{u}_\varepsilon(t)) \right\|_{\mathbb{H}_\alpha} \le (\lambda+a)\, K_3\ , \qquad t\in [0,T^\varepsilon)\ , \label{pam3}
\end{equation}
with a possibly larger constant $K_3$, but still depending only on $\alpha$ and $\kappa$.
Recalling that
$$
e^{-t\mathbb{A}_\alpha} \mathbf{u}^0 - \mathbf{u}^0 = -\int_0^t e^{-s\mathbb{A}_\alpha} \mathbb{A}_\alpha\mathbf{u}^0\,\rd s\ ,\quad t\ge 0\ ,
$$
it follows from \eqref{R0}, \eqref{vdk}, and \eqref{pam3} that, for $t\in [0,T^\varepsilon)$,
\begin{align}
\left\| \mathbf{u}_\varepsilon(t) - \mathbf{u}^0 \right\|_{\mathbb{H}_\alpha} & \le \left\| e^{-t\mathbb{A}_\alpha} \mathbf{u}^0 - \mathbf{u}^0 \right\|_{\mathbb{H}_\alpha} + \int_0^t \left\| e^{-(t-s)\mathbb{A}_\alpha} F_\varepsilon(\mathbf{u}_\varepsilon(s) ) \right\|_{\mathbb{H}_\alpha} \rd s \nonumber \\
& \le \frac{M_\alpha}{\omega}\,\left(1-e^{-\omega t}\right)\,\left( \|\mathbb{A}_\alpha\mathbf{u}^0\|_{\mathbb{H}_\alpha} + (\lambda+a) K_3 \right) \ . \label{R1}
\end{align}
Combining this estimate with \eqref{R01} further gives
\begin{align}
u_\varepsilon (t,x) & = u^0(x) + u_\varepsilon(t,x) -u^0(x) \ge -1+\kappa - \| u_\varepsilon(t) - u^0 \|_\infty \nonumber \\
& \ge -1 + \kappa - c_I\|\mathbf{u}_\varepsilon(t)  -\mathbf{u}^0\|_{\mathbb{H}_\alpha} \nonumber \\
& \ge -1+\kappa - \frac{M_\alpha c_I}{\omega}\, \left(1-e^{-\omega t}\right) \left( \|\mathbb{A}_\alpha\mathbf{u}^0\|_{\mathbb{H}_\alpha} +(\lambda+a) K_3 \right)\ , \label{R2}
\end{align}
for $(t,x)\in [0,T^\varepsilon) \times [-1,1]$.

On the one hand, if $T$ is chosen such that
$$
\frac{M_\alpha}{\omega}\, \left(1-e^{-\omega T}\right) \left( \|\mathbb{A}_\alpha\mathbf{u}^0\|_{\mathbb{H}_\alpha} +(\lambda+a) K_3 \right) < \min\left\{ \frac{2-\sqrt{2}}{\kappa} , \frac{\kappa}{2 c_I} \right\}\ ,
$$
then we deduce from \eqref{R1} and \eqref{R2} that, for $t\in [0,T)$, 
$$
\|\mathbf{u}_\varepsilon(t)\|_{\mathbb{H}_\alpha} = \sqrt{\|u_\varepsilon(t)\|_{H^{2+2\alpha}}^2 + \|\partial_t u_\varepsilon(t)\|_{H^{2\alpha}}^2} < \frac{1}{\kappa_0}\ ,
$$
while 
$$
u_\varepsilon(t,x) > - 1 + \kappa - \frac{\kappa}{2} = - 1+ \kappa_0\ , \qquad x\in [-1,1]\ .
$$
Consequently, $u_\varepsilon(t)\in S_2^{2+2\alpha}(\kappa_0)$ and $\|\partial_t u_\varepsilon(t)\|_{H^{2\alpha}} < 1/\kappa_0$ for all $t\in [0,T)$. We have thus shown that $T^\varepsilon\ge T$ and completed the proof of the first statement of Lemma~\ref{le.pam1}. 

On the other hand, let $\delta>0$ be such that
$$
\frac{M_\alpha}{\omega}\, \left( 1 +2 K_3 \right) \delta < \min\left\{ \frac{2-\sqrt{2}}{\kappa} , \frac{\kappa}{2 c_I} \right\}\ ,
$$
and assume that $\lambda\in (0,\delta)$, $a\in [0,\delta)$, and $\|\mathbb{A}_\alpha\mathbf{u}^0\|_{\mathbb{H}_\alpha} \in [0,\delta)$. Arguing as above we realize that, for all $t\in [0,T^\varepsilon)$, there hold
$$
u_\varepsilon(t)\in S_2^{2+2\alpha}(\kappa_0) \;\;\;\text{ and }\;\;\; \|\partial_t u_\varepsilon(t)\|_{H^{2\alpha}} < \frac{1}{\kappa_0}\ ,
$$
which entails that $T^\varepsilon=\infty$ and also that $T_m^\varepsilon=\infty$ as claimed in the second statement of Lemma~\ref{le.pam1}. Recalling \eqref{z4} completes the proof of Lemma~\ref{le.pam1}.
\end{proof}

\begin{proof}[{\bf Proof of Theorem~\ref{Bq}}]
According to Lemma~\ref{le.pam1}
$$
T^0 := \inf_{\varepsilon\in (0,1)} T^\varepsilon > 0\ .
$$
Fix $T\in (0,T^0)$. Recalling the definition \eqref{pam2} of $T^\varepsilon$, we realize that 
\begin{equation}
\text{ the family }\; (u_\varepsilon)_{\varepsilon \in (0,1)} \;\text{ is bounded in }\; C^1([0,T],H^{2\alpha}(I))\;\text{ and in }\; C([0,T],H^{2+2\alpha}(I))\ . \label{z5}
\end{equation} 
Also, since 
$$
\partial_t^2 u_\varepsilon = - \partial_t u_\varepsilon - \beta \partial_x^4 u_\varepsilon + \left( \tau + a \|  \partial_x u_\varepsilon\|_2^2 \right) \partial_x^2 u_\varepsilon - \lambda g_\varepsilon(u_\varepsilon) \;\;\text{ a.e. in }\;\; (0,T)\times I
$$
according to \eqref{equ} and \cite[Corollary~3.4]{LW14a}, we infer from \eqref{z3} and \eqref{z5} that 
\begin{equation}
\text{ the family }\; (\partial_t u_\varepsilon)_{\varepsilon \in (0,1)} \;\text{ is bounded in }\; C^1([0,T],H^{2\alpha-2}(I))\;\text{ and in }\; C([0,T],H^{2\alpha}(I))\ . \label{z6}
\end{equation}
Thus, given $2\alpha'\in (0,2\alpha)$, it follows from the compactness of the embeddings of $H^{2+2\alpha}(I)$ in $H^{2+2\alpha'}(I)$ and in $W_\infty^1(I)$, that of $H^{2\alpha}(I)$ in $H^{2\alpha'}(I)$, and the Arzel\`a-Ascoli theorem that there are a function 
$$
u_0\in C([0,T],H^{2+2\alpha'}(I)) \cap C^1([0,T],H^{2\alpha'}(I))
$$ 
and a sequence $(\e_k)_{k\ge 1}$ of positive real numbers with $\varepsilon_k \to 0$ such that 
\begin{equation}
\begin{split}
& \lim_{k\to\infty} \left\{ \sup_{t\in [0,T]} \| u_{\varepsilon_k}(t)-u_0(t)\|_{H^{2+2\alpha'}} + \sup_{t\in [0,T]} \| u _{\varepsilon_k}(t) - u_0(t) \|_{W_\infty^1} \right\}  = 0\ , \\
& \lim_{k\to\infty} \sup_{t\in [0,T]} \| \partial_t u_{\varepsilon_k}(t)-\partial_t u_0(t)\|_{H^{2\alpha'}} = 0\ .
\end{split} \label{z11}
\end{equation}
In particular,
\begin{equation}
\lim_{k\to\infty} \sup_{t\in [0,T]} \| \mathbf{u}_{\varepsilon_k}(t) - \mathbf{u}_0(t)\|_{\mathbb{H}_{\alpha'}} = 0 \;\;\text{ with }\;\; \mathbf{u}_0:=(u_0,\partial_t u_0)\ . \label{z12}
\end{equation}
A first consequence of \eqref{z1} and \eqref{z11} is 
\begin{equation}
-1+\kappa_0\le u_0(t,x)\le \frac{c_I}{\kappa_0}\ , \qquad (t,x)\in [0,T]\times [-1,1]\ . \label{R3}
\end{equation}
It also readily follows from Proposition~\ref{prop2} (with $s=2-\alpha$, $\nu=2\alpha$, and $\sigma=2\alpha'$), \eqref{z1}, \eqref{z11}, and \eqref{R3} that 
\begin{align*}
\sup_{t\in [0,T]} \left\| g_{\varepsilon_k}(u_{\varepsilon_k}(t)) - \frac{1}{(1+u_0(t))^2} \right\|_{H^{2\alpha'}} & \le C_{sg}\ \varepsilon_k^{(1-4\alpha)/(3-4\alpha)} \\
& \quad + C \sup_{t\in [0,T]} \left\| \frac{1}{(1+u_{\varepsilon_k}(t))^2} - \frac{1}{(1+u_0(t))^2} \right\|_{W_\infty^1}
\end{align*}
with right-hand side converging to zero as $k\rightarrow \infty$, hence
$$
F_{\varepsilon_k}(\mathbf{u}_{\varepsilon_k}) \longrightarrow F_0(\mathbf{u}_0) := 
\begin{pmatrix} 
0 \\
\displaystyle{- \frac{\lambda}{(1+u_0)^2} + a \|\partial_x u_0\|_2^2\, \partial_x^2 u_0 }
\end{pmatrix} \quad\text{ in }\quad C([0,T],\mathbb{H}_{\alpha'})\,.
$$
We are then in a position to pass to the limit as $\varepsilon_k\to 0$ in \eqref{vdk} and deduce from \eqref{z12} and the above convergence that
\begin{equation}
\mathbf{u}_0(t) = e^{-t\mathbb{A}_{\alpha'}} \mathbf{u}^0 +\int_0^t  e^{-(t-s)\mathbb{A}_{\alpha'}}\, F_0(\mathbf{u}_0(s))\,\rd s \ , \quad t\in [0,T]\ . \label{u}
\end{equation} 
In other words, $\mathbf{u}_0$ is a mild solution in $\mathbb{H}_{\alpha'}$ to
\begin{equation}\label{S}
\partial_t \mathbf{u}_0 +\mathbb{A}_{\alpha'}\mathbf{u}_0 =F_0( \mathbf{u}_0)\,,\quad t\in (0,T]\,,\qquad \mathbf{u}_0(0)=\mathbf{u}^0\,.
\end{equation}
Furthermore, $\mathbb{H}_{\alpha'}$ is reflexive and $F_0$ is a locally Lipschitz continuous map from $S_2^{2+2\alpha'}(\kappa_0)\times H^{2\alpha'}(I)$ into itself. Thus, since $\mathbf{u}^0\in D(\mathbb{A}_{\alpha'})$, we infer from \cite[Theorem~6.1.6]{Pa83} that $\mathbf{u}_0$ is actually a strong solution to \eqref{S}, that is, $\mathbf{u}_0\in L_1(0,T,D(\mathbb{A}_{\alpha'}))$ is differentiable a.e. with $\partial_t \mathbf{u}_0 = (\partial_t u_0, \partial_t^2 u_0)\in L_1(0,T,\mathbb{H}_{\alpha'})$. Therefore, $\mathbf{u}_0$ is a strong solution to \eqref{sg4.1}-\eqref{sg4.3}. Finally, the stated convergence of the  sequence $(\psi_{\varepsilon_k})_{k}$ readily follows from \eqref{errest1} and \eqref{z12}. 

In fact, we have so far proved Theorem~\ref{thm2} only for a sequence $(\varepsilon_k)_k$. However, the strong solution $u_0$ to \eqref{sg4.1}-\eqref{sg4.3} is unique, see \cite{LW14b} for a proof when $a=0$ which extends to the case $a>0$. This ensures that the whole family $(u_\varepsilon)_{\varepsilon\in (0,1)}$ converges, thereby completing the proof of Theorem~\ref{thm2}.
\end{proof}

\section*{Acknowledgments}

Part of this work was done while PhL enjoyed the hospitality of the Institut f\"ur Angewandte Mathematik, Leibniz Universit\"at Hannover.

\bibliographystyle{siam} 
\bibliography{MEMS}
\end{document}